\providecommand{\noopsort}[1]{} 
\theoremstyle{plain}
\newtheorem{theorem}{Theorem}[section]
\newtheorem{lemma}[theorem]{Lemma}
\newtheorem{conjecture}[theorem]{Conjecture}
\newtheorem{question}[theorem]{Question}
\newtheorem{main}{Theorem}
\theoremstyle{definition}
\newtheorem{definition}[theorem]{Definition}
\theoremstyle{remark}
\newtheorem{example}[theorem]{Example}
\numberwithin{equation}{section}
\newcommand{\Z}{\mathbb{Z}}
\newcommand{\R}{\mathbb{R}}\newcommand{\C}{\mathbb{C}}\newcommand{\HH}{\mathbb{H}}
\newcommand{\pp}{\mathbb{P}}
\newcommand{\s}{\mathbb{S}}
\DeclareMathOperator{\Sq}{Sq}
\DeclareMathOperator{\PP}{P}
\DeclareMathOperator{\im}{im}
\newcommand{\of}[1]{\left(#1\right)}
\title[Extending Adams' theorem]{Extending Adams' theorem from singly generated to periodic cohomology}
\author{John R. Harper}
\address{Department of Mathematics, University of Rochester, Rochester, NY 14627, USA}
\email{harperjohn973@gmail.com}
\author{Lee Kennard}
\address{Department of Mathematics, Syracuse University, Syracuse, NY 13244, USA}
\email{ltkennar@syr.edu}
\begin{document}

\begin{abstract}
In 1960, J.F. Adams introduced secondary cohomology operations that are defined on cohomology elements on which sufficiently many Steenrod algebra elements vanish. This led to his theorem on singly generated cohomology rings, which in turn led to his celebrated resolution of the Hopf invariant one problem. Here we advertise a conjecture that would extend Adams' result and prove it in a special case.
\end{abstract}

\maketitle

\section{Introduction}
\bigskip

J.F. Adams' theorem on singly generated cohomology rings was the key step in his resolution of the Hopf invariant one problem. The theorem is as follows (see \cite{Adams60}):

\begin{theorem}[Adams, 1960]\label{thm:Adams}
Let $X$ be a topological space. If $H^*(X;\Z_2)$ is isomorphic to the polynomial ring $\Z_2[x]$, then $x$ has degree $k \in \{1,2,4\}$.
\end{theorem}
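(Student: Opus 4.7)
The plan is to combine primary Steenrod-algebra obstructions with Adams' secondary cohomology operations, leveraging the fact that $H^*(X;\Z_2) \cong \Z_2[x]$ is supported only in degrees that are multiples of $n := |x|$. In particular every class in a degree $m$ with $jn < m < (j+1)n$ vanishes for all $j \geq 0$, so $Sq^j(x) = 0$ for all $0 < j < n$.

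\smallskip

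\emph{Stage 1: $n$ must be a power of two.} A classical consequence of the Adem relations is that $Sq^n$ is decomposable in the mod-$2$ Steenrod algebra $\mathcal{A}_2$ whenever $n$ is not a power of two; writing $Sq^n = \sum_i \alpha_i \beta_i$ with $0 < |\beta_i| < n$ yields
\[
x^2 \;=\; Sq^n(x) \;=\; \sum_i \alpha_i\!\bigl(\beta_i(x)\bigr) \;=\; 0,
\]
contradicting the polynomial hypothesis. Hence $n = 2^k$.

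\smallskip

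\emph{Stage 2: Ruling out $k \geq 3$.} Suppose $n = 2^k$ with $k \geq 3$. The same degree argument gives $Sq^{2^i}(x) = 0$ for $0 \leq i \leq k-1$, so Adams' secondary operation $\Phi$ from \cite{Adams60} --- built from a relation expressing $Sq^{2^k}$ modulo decomposables as a sum $\sum_i \alpha_i \cdot Sq^{2^i}$ with $|\alpha_i| = 2^k - 2^i$ --- is defined on $x$. For $k \geq 4$, Adams' theorem places $x^2 = Sq^n(x)$ in the indeterminacy subspace $\sum_{i=0}^{k-1} \alpha_i \cdot H^{n + 2^i}(X;\Z_2)$. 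Each degree $n + 2^i$ lies strictly between $n$ and $2n$, so each summand vanishes by the polynomial structure; hence $x^2 = 0$, a contradiction. For $k = 3$ this short argument fails because Adams' theorem only holds for $k \geq 4$ (indeed $\Ca P^2 = S^8 \cup e^{16}$ shows that the $16$-skeleton is not itself obstructed). Instead I would iterate: apply the $k' = 4$ Adams operation to $x^2 \in H^{16}(X;\Z_2)$. The primaries $Sq^{2^i}(x^2)$ for $0 \leq i \leq 3$ all vanish by Cartan together with $Sq^j(x) = 0$ for $0 < j < 8$, so $(x^2)^2 = x^4$ is forced into $\sum_{i=0}^{3} \alpha_i \cdot H^{16 + 2^i}(X;\Z_2)$. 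Among the targets $H^{17}, H^{18}, H^{20}, H^{24}$ only $H^{24} = \Z_2 \cdot x^3$ is nonzero, so the indeterminacy reduces to $\alpha_3 \cdot \Z_2 x^3$ with $\alpha_3 \in \mathcal{A}_2$ of degree $8$. An explicit Cartan computation of $\alpha_3(x^3)$ --- using Adams' specific $\alpha_3$ from the $k' = 4$ relation and the expansion $x^3 = x \cdot x^2$ --- should give $\alpha_3(x^3) \neq x^4$, forcing $x^4 = 0$ and the final contradiction.

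\smallskip

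The hard part is the $k = 3$ case, which is genuinely new beyond Adams' Hopf-invariant-one theorem. Its resolution hinges on identifying Adams' explicit $\alpha_3$ of degree $8$ and checking $\alpha_3(x^3) \neq x^4$ in $\Z_2[x]$ with $|x| = 8$; this is a concrete but delicate Steenrod-algebra calculation, and is precisely the step where the polynomial structure (i.e.\ the presence of $x^3, x^4, \ldots$) must be exploited, since the $2n$-skeleton alone cannot distinguish $X$ from $\Ca P^2$.
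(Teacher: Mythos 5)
Your Stage 1 (Adem decomposability forces $n = 2^k$) and the $n \geq 16$ portion of Stage 2 (Adams' secondary decomposition of $\Sq^{n}$ applied to $x$, with indeterminacy landing in groups of degree strictly between $n$ and $2n$, all of which vanish in $\Z_2[x]$) are correct and match the paper's Section~\ref{sec:ClassicalProof}. You also correctly identify that $n = 8$ is the genuinely delicate case, and that the Cayley plane $\mathbb{OP}^2 = S^8 \cup e^{16}$ is why the straightforward argument cannot work on the $2n$-skeleton alone.

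However, the proposed route for $n = 8$ has a real gap. You apply the $\Sq^{16}$-decomposition to $u = x^2 \in H^{16}$ and hope to show that the single surviving indeterminacy term in degree $32$, namely $\alpha_3(H^{24}) = \alpha_3(\Z_2 x^3)$, misses $x^4$. But the relevant primary in the indeterminacy is $\Sq^8$, and a direct Cartan computation with $|x| = 8$ shows
\[
\Sq^8(x^3) = \sum_{a+b+c=8} \Sq^a(x)\,\Sq^b(x)\,\Sq^c(x) = 3\,\Sq^8(x)\cdot x\cdot x = x^2\cdot x\cdot x = x^4,
\]
since $\Sq^a(x) = 0$ for $0 < a < 8$ and $\Sq^8(x) = x^2$. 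So the indeterminacy subgroup is all of $\Z_2 x^4$, and the conditional relation carries no information; the check you hoped would yield ``$\alpha_3(x^3) \neq x^4$'' in fact yields equality. The paper circumvents this using Gon\c{c}alves' additional secondary operation $\Phi'_{0,3}$, which has the two key properties $\Phi'_{0,3}(\Sq^8 u) \equiv \Sq^{16}u + \sum_{0 < 2^i < 8}\Sq^{2^i}\Phi_i(u)$ and $\Phi'_{0,3}(u^2) \equiv u^3$. Taking $u = x$ and using $\Sq^{16}(x) = 0$ (unstable) produces a conditional relation for $x^3 \in H^{24}$ rather than $x^4 \in H^{32}$. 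The decisive degree drop is that the $\Sq^8$-indeterminacy for $H^{24}$ is $\Sq^8(H^{16}) = \Z_2\,\Sq^8(x^2) = \Z_2\,(\Sq^4 x)^2 = 0$, whereas for $H^{32}$ it was $\Sq^8(H^{24}) = \Z_2 x^4 \neq 0$. So the contradiction $x^3 = 0$ goes through at degree $24$, while your degree-$32$ version does not. This refinement, not a direct re-computation of the degree-$8$ coefficient, is the missing ingredient.
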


Adams' theorem also holds for truncated polynomial algebras on one generator whose square is non-trivial. It allows for $k = 8$ in addition, so long as the third power of the generator vanishes. All values of $k$ permitted by Adams' theorem were long known to be realized. Examples include the classifying spaces $B\mathsf{O}(1) = \R\pp^\infty$, $B\mathsf{U}(1) = \C\pp^\infty$, and $B\mathsf{Sp}(1) = \HH\pp^\infty$, their truncated analogues, and the Cayley plane.

The conjectured extension has the same conclusion but a weaker assumption. For simplicity, we restrict our attention to an extension in the non-truncated case. We consider \textit{$k$--periodic} cohomology rings:

\begin{definition}\label{def:periodicity}
For a connected topological space $X$, we say that a non-zero element $x\in H^k(X;\Z_2)$ induces periodicity if the map 
	$H^i(X;\Z_2) \to H^{i+k}(X;\Z_2)$ 
induced by multiplication by $x$ is an isomorphism for all $i \geq 0$. When such an element exists, we say that $H^*(X;\Z_2)$ is $k$-periodic. \end{definition}

\begin{example}[Acyclic spaces] 
If we were to only require surjectivity in degree zero and allow $x = 0$ in the definition, we would be including mod 2 acyclic spaces (or mod 2 homology spheres in the manifold setting, properly formulated). For our purposes, these cases are trivial, so we require here that $x \neq 0$ and hence generates $H^k(X;\Z_2)$.
\end{example}

\begin{example}
In addition to singly generated cohomology rings, the spaces $\s^1 \times \C\pp^\infty$, $N^2 \times \HH\pp^\infty$, and $N^3 \times \HH\pp^\infty$ have $4$-periodic $\Z_2$-cohomology rings, where $N$ is any connected manifold of dimension 2 or 3 in the last two cases, respectively.
\end{example}

Observe that, if $H^*(X;\Z_2) \cong \Z_2[x]$, then $x$ is nonzero and induces periodicity. Moreover, $x$ has minimal degree among all such elements. The following conjecture therefore extends Adams' theorem (see \cite[Conjecture 6.3]{Kennard13}):

\begin{conjecture}[$\Z_2$ Periodicity Conjecture]\label{con:2}
Let $X$ be a topological space such that $H^*(X;\Z_2)$ is $k$-periodic for some $k \ge 1$. If $k$ is the minimum period, then $k \in \{1,2,4\}$.
\end{conjecture}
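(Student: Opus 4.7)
The plan is to import Adams' original argument into the periodic setting by applying his secondary cohomology operations directly to the periodicity generator $x \in H^k(X;\Z_2)$. Periodicity provides crucial rigidity on the Steenrod action: since multiplication by $x$ is an isomorphism $H^i(X;\Z_2) \to H^{i+k}(X;\Z_2)$, every Steenrod operation $\Sq^c x \in H^{k+c}(X;\Z_2)$ with $0 < c < k$ factors uniquely as $\Sq^c x = x \cdot u_c$ for some $u_c \in H^c(X;\Z_2)$, while unstability forces $\Sq^c x = 0$ for $c > k$ and $\Sq^k x = x^2$. The strategy is to use these factorizations to reduce the analysis of the Steenrod module generated by $x$ to something sufficiently close to the polynomial case that Adams' secondary operations have analogous consequences.

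The first step is to eliminate $k$ that is not a power of $2$. For such $k$, the Adem relations express $\Sq^k \equiv \sum_i \Sq^{a_i}\Sq^{b_i}$ with $0 < b_i < k$ in the Steenrod algebra. Expanding
\[
x^2 = \Sq^k x = \sum_i \Sq^{a_i}(x \cdot u_{b_i})
\]
via the Cartan formula and the factorizations above yields an identity in $H^{2k}(X;\Z_2) = \Z_2 \cdot x^2$ involving the $u_c$ and their Steenrod operations. The hope is that careful bookkeeping, together with the minimality of $k$ as a period, shows this identity cannot hold.

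Next, for $k = 2^j$ with $j \geq 3$, one invokes Adams' secondary operation $\Phi$ associated to an appropriate relation among the $\Sq^{2^i}$. Verifying that $\Phi$ is defined on $x$ reduces, via the factorization $\Sq^{b_i} x = x \cdot u_{b_i}$, to the vanishing of certain $u_{b_i}$; when these vanish, $\Phi(x)$ is a class in a quotient of $H^{2k-1}(X;\Z_2)$, and Adams' polynomial computation predicts that $\Phi(x)$ is nonzero and detects $x^2$. Ruling out $k = 8$ proceeds as in the truncated case of Adams' theorem, additionally using the existence of $x^3$ (absent from $\Z_2[x]/(x^3)$) to force a contradiction.

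The principal obstacle is controlling both the hypotheses and the indeterminacy of $\Phi$. In Adams' polynomial setting, no cohomology classes exist in degrees $0 < c < k$, so $\Phi$ is unconditionally defined with zero indeterminacy. In the periodic setting, the classes $u_c$ can be arbitrary and contribute both to the failure of primary vanishings (threatening definedness of $\Phi$) and to the indeterminacy of $\Phi(x)$. The heart of the proposed proof is therefore to show that periodicity forces enough algebraic cancellation among the $u_c$ and their Steenrod operations to recover Adams' sharp conclusion. This is precisely where the full conjecture remains open, and presumably where the special case proved in the paper imposes an auxiliary hypothesis that bypasses these extra classes.
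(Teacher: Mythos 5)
The statement you were asked to prove is stated in the paper as a \emph{conjecture}; it is open, and the paper proves it only under the auxiliary hypothesis $H^{k/2}(X;\Z_2)=0$, or more generally $\Sq^{k/2}(x)=0$ (Theorem A). Your proposal is therefore necessarily incomplete, but your closing paragraph correctly diagnoses both the obstruction (the classes in degrees $0 < c < k$, which threaten definedness and contribute indeterminacy to the secondary operation) and the shape of the paper's partial result.

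Two places where your sketch diverges from what is actually carried out. For eliminating $k$ not a power of two, you propose direct Cartan-formula bookkeeping of the factorizations $\Sq^c x = x u_c$; the paper instead (following \cite{Kennard13}) first proves that $x^2$ cannot equal $\Sq^i(y)$ for any $y$ of degree outside $\{k,2k\}$, via a squaring argument applied to a factorization of maximal $i$ together with minimality of the period. With that lemma in hand, evaluating an Adem decomposition of $\Sq^k$ on $x$ and using that $x^2$ generates $H^{2k}$ gives an immediate contradiction, with no bookkeeping of the $u_c$ at all. For $k=2^a \geq 16$, the paper's main technical content (Section 4) is a two-stage Adem-relation induction showing that the single vanishing $\Sq^{k/2}(x)=0$ propagates to $\Sq^i(x)=0$ for all $0<i<k$ — first inductively for $k/2<i<k$, then inductively for $0<i<k/2$. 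Once that primary vanishing is established, the conditional relation $x^2=\sum_{0<i<k}\Sq^i(y_i)$ contradicts the same lemma, so the indeterminacy issue you worry about is absorbed into that lemma rather than controlled by separate cancellations among the $u_c$. The gap that remains is exactly the one you name: nothing in the periodicity structure alone is yet known to force $\Sq^{k/2}(x)=0$.
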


Evidence for this conjecture includes the proof that $k$ is a power of two (see \cite[Proposition 1.3]{Kennard13}). This result extended J. Adem's contribution to the Hopf invariant one problem to the case of periodic cohomology as in this conjecture (see \cite{Adem52}). Additional evidence exists for the odd prime analogue of the conjecture in the manifold case when $k = 2p$ (see Section \ref{sec:truncated}). In this article, we provide additional evidence:

\begin{main}\label{thm:2}
The periodicity conjecture holds for spaces $X$ satisfying the property that $H^{\frac k 2}(X; \Z_2) = 0$, where $k$ is the minimum period.
\end{main}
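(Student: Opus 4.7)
The plan is to argue by contradiction. By \cite[Proposition 1.3]{Kennard13}, the minimum period $k$ is necessarily a power of two; write $k = 2^m$. The conclusion $k \in \{1,2,4\}$ is equivalent to $m \leq 2$, so assume $m \geq 3$ throughout.

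The first step is to extract direct vanishings from the hypothesis. Combined with periodicity, $H^{k/2}(X;\Z_2) = 0$ forces $H^{k/2 + nk}(X;\Z_2) = 0$ for every $n \geq 0$, and in particular $\Sq^{k/2}(x) = 0$ since this class lies in $H^{3k/2}(X;\Z_2)$. The strategic objective is to upgrade this single vanishing $\Sq^{2^{m-1}}(x) = 0$ to the full collection $\Sq^{2^i}(x) = 0$ for all $0 < i < m$. Via the periodicity isomorphism, each $\Sq^{2^i}(x) \in H^{k+2^i}(X;\Z_2)$ can be written uniquely as $x \cdot y_i$ for some $y_i \in H^{2^i}(X;\Z_2)$, with the hypothesis providing $y_{m-1} = 0$. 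For the remaining indices, the plan is to combine (a) Adem-relation consequences of $\Sq^{k/2}(x) = 0$, which produce linear identities among the $y_i$ and their Steenrod operates, with (b) the minimality of the period $k$, which forbids any element of degree less than $k$ from inducing periodicity: a hypothetical nonzero $y_i \in H^{2^i}(X;\Z_2)$ should, via a construction from $y_i$, $x$, and the Steenrod action, yield a periodicity element of degree $2^i < k$, yielding the desired contradiction.

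Once $\Sq^{2^i}(x) = 0$ is established for every $0 < i < m$, the class $x$ satisfies the same Steenrod-algebra vanishing conditions as the generator of a singly generated polynomial $\Z_2$-cohomology ring, and one can adapt Adams' original argument almost verbatim. For $m \geq 4$ this means invoking Adams' secondary cohomology operation $\Phi$, defined precisely on classes satisfying these vanishings, and identifying $\Phi(x) \equiv \Sq^{2^m}(x) = x^2$ modulo an indeterminacy $Q$. A dimension count using $H^{k/2}(X;\Z_2) = 0$ together with periodicity is intended to show $Q = 0$ in degree $2k$, forcing $x^2 = 0$ and contradicting periodicity (since $x^n \neq 0$ for all $n$). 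In the residual case $m = 3$ (so $k = 8$), Adams' secondary-operation argument does not apply directly, so one must transplant into the periodic setting the auxiliary mechanism by which Adams excludes $k = 8$ from singly generated polynomial rings.

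The main obstacle is the middle step: promoting the single vanishing $y_{m-1} = 0$ to vanishings of all $y_i$ via the minimality of the period. Crafting the construction that turns a hypothetical nonzero $y_i$ into a genuine periodicity element of degree $2^i$, as opposed to a mere nonzero class in that degree, is where the delicate interaction among Steenrod operations, the Cartan formula, and the periodicity isomorphism will be concentrated; handling the case $m = 3$ will be a secondary technical hurdle, since no clean secondary-operation black box is available there.
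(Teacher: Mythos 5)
Your high-level strategy — deduce $\Sq^{k/2}(x) = 0$ from the hypothesis, bootstrap this to vanishing of the other Steenrod squares on $x$, then run Adams' secondary-operation argument — is exactly the paper's. But the two specific steps you propose for executing it are both problematic.

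\textbf{The indeterminacy cannot be killed by a dimension count.} You propose to show $Q = 0$ in degree $2k$ so that the secondary operation gives $x^2 = 0$. This works in Adams' singly-generated case because $H^{2k-2^i}(X;\Z_2)$ vanishes for all $0 < 2^i < k$. In the periodic case, periodicity gives $H^{2k-2^i}\cong H^{k-2^i}$, and $k - 2^i$ ranges over $(0,k)$; the only value the hypothesis kills is $k/2$ (i.e., $2^i = k/2$). All the other contributions to the indeterminacy survive. The paper instead exploits that $x^2 \neq 0$ generates the cyclic group $H^{2k}\cong\Z_2$, so if $x^2 = \sum_{0<i<k}\Sq^i(y_i)$ then $x^2 = \Sq^i(y_i)$ for at least one index, with $\deg(y_i) = 2k - i \notin \{k,2k\}$. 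A separate structural lemma (the paper's Lemma \ref{lem:2}, proved by squaring, the Cartan formula, and minimality of the period) rules this out. So the final contradiction comes from showing $x^2$ cannot be in the image of a Steenrod square of a class whose degree avoids $\{k,2k\}$, not from showing the right-hand side vanishes. Without this lemma, the conditional relation you produce is compatible with $x^2\neq 0$ and there is no contradiction.

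\textbf{The middle step needs all degrees, and the mechanism is different.} You aim only at $\Sq^{2^i}(x) = 0$ for $0 < i < m$, but the Adem-relation bootstrap actually requires proving $\Sq^i(x) = 0$ for \emph{every} $0 < i < k$ (the Adem relations that decompose $\Sq^\delta\Sq^{k/2}$ and $\Sq^{2i}\Sq^{k-i}$ involve non-power-of-two exponents, and the Cartan-formula expansion of $\Sq^{i-j}(\Sq^j(x)) = \Sq^{i-j}(xx_j)$ produces terms $\Sq^{i-j-h}(x)$ in a range of degrees). The paper does this by a double induction: first upward from $k/2$ to $k$, then upward from $0$ to $k/2$, each step combining an Adem relation, periodicity (writing $\Sq^j(x) = xx_j$), and the Cartan formula, and feeding the result into the minimality lemmas. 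Your proposal — that a nonzero $y_i \in H^{2^i}$ should directly give a periodicity element of degree $2^i < k$ — is in the right spirit (minimality is indeed what powers the contradiction), but as stated it is closer to a wish than an argument, and it does not address the non-power-of-two degrees. You also leave the $k = 8$ case entirely open; the paper handles it via Gon\c calves' refinement of Adams' secondary decomposition, showing $k = 8$ forces $x^3 = 0$, which periodicity excludes.
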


One recovers Adams' theorem from Theorem \ref{thm:2} by restricting to the case where $H^i(X; \Z_2) = 0$ for all $0 < i < k$. Our proof actually works under the more general assumption that the Steenrod square $\Sq^{\frac k 2}$ evaluates to zero on the group $H^k(X; \Z_2) \cong \Z_2$. This condition is implied by the one in Theorem \ref{thm:2} because the image lands in the group $H^{k + \frac k 2}(X; \Z_2)$, which by periodicity is isomorphic to $H^{\frac k 2}(X; \Z_2)$.

The proof of Theorem \ref{thm:2} follows Adams' strategy and, in particular, uses secondary cohomology operations. Unlike the singly generated case, however, even proving that these operations are defined requires proving certain vanishing results for the primary operations, i.e., Steenrod squares. The assumption in Theorem \ref{thm:2} gives us one such vanishing statement, and the main technical work in this article is to derive additional vanishing results for the action of the Steenrod algebra. Once this is done, we may apply Adams' basic strategy, together with results in \cite{Kennard13}, to finish the proof.

The odd prime analogue of Conjecture \ref{con:2} is the following:

\begin{conjecture}[$\Z_p$ Periodicity Conjecture]\label{con:p}
Let $X$ be a topological space and $p$ be an odd prime. If a non-zero element $x \in H^k(M; \Z_p)$ induces isomorphisms $H^i(X; \Z_p) \to H^{i+k}(X; \Z_p)$ by multiplication for all $i \geq 0$, and if $x$ has minimal degree among all such elements, then $k = 2 \lambda$ for some divisor $\lambda$ of $p - 1$.
\end{conjecture}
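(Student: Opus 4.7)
The plan is to parallel the mod~2 strategy used for Theorem~\ref{thm:2}, working now with the odd-primary Steenrod algebra generated by the Bockstein $\beta$ and the reduced powers $P^i$ (raising degree by $2i(p-1)$). Let $x \in H^k(X;\Z_p)$ denote a minimal periodicity generator. The first reduction is to show $k$ is even: for odd $p$, graded commutativity gives $x^2 = -x^2$ when $|x|$ is odd, hence $x^2 = 0$; but multiplication by $x$ from $H^k$ to $H^{2k}$ must be an isomorphism between two nonzero groups, contradiction. So $k = 2m$, and the goal becomes $m \mid p - 1$.

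The next step is a primary Steenrod analysis, modelled on Liulevicius' argument in the singly generated case. Since $|x| = 2m$, the unstable axiom forces $P^m(x) = x^p \neq 0$ and $P^i(x) = 0$ for $i > m$. Iterating the Cartan formula, which gives $P^1(x^p) = 0$ in $\Z_p$, together with the Adem relation $P^1 P^b \equiv (b+1)\, P^{b+1} \pmod p$, yields
\[
	(P^1)^a\, P^{m-a}(x) \equiv m(m-1)\cdots(m-a+1)\, x^p \pmod p
\]
for each $1 \le a \le m$. In the singly generated case treated by Liulevicius, the class $P^{m-a}(x)$ lives in a nonzero group only if $m \mid a(p-1)$, and combining this with the identity above for $a = 1$ (and with the analogous secondary-operation computations needed when $p \mid m$) forces $m \mid p - 1$. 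In the periodic case, however, the groups $H^{2m + 2i(p-1)}(X;\Z_p)$ are identified via periodicity with $H^{2i(p-1) \bmod 2m}(X;\Z_p)$ and need not vanish. The central new technical problem is thus to show that each $P^i(x)$ still lies in the subring generated by $x$, i.e., that its image under the periodicity identification into the low-degree cohomology is zero.

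To settle this problem I would adapt the vanishing-of-Steenrod-squares arguments carried out in the main body of the present paper (for the mod~2 Steenrod algebra) to the mod~$p$ Steenrod algebra, using instability $P^j(y) = 0$ for $2j > |y|$, odd-primary Adem relations, and the periodicity isomorphisms to shrink the space of admissible off-degree classes at each step. Beyond what the primary analysis achieves, the remaining gap would be closed using mod~$p$ secondary operations in the spirit of Liulevicius and Shimada--Yamanoshita: the vanishing results above would place $x$ in the domain of a suitable secondary operation $\Phi$, and computing $\Phi(x)$ in two ways --- via its defining relation, and via the identity $P^m(x) = x^p$ --- should produce a nonzero class in a degree where periodicity forces zero, unless $m \mid p-1$. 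The principal obstacle, exactly paralleling the mod~2 situation, is verifying that the required secondary operations are defined on $x$; by analogy with Theorem~\ref{thm:2} I would expect the first achievable result in this direction to be a partial version of Conjecture~\ref{con:p} conditional on a vanishing hypothesis such as $H^{k/2}(X;\Z_p) = 0$, obtainable by exactly the strategy laid out here.
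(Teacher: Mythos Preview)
The statement you are addressing is a \emph{conjecture}, and the paper does not prove it. What the paper proves is the partial result Theorem~\ref{thm:p}: writing the minimum period as $k = 2\lambda p^a$ with $\lambda \mid p-1$ (this form being already known from \cite[Proposition~2.1]{Kennard13}), one has $a = 0$ provided $H^{2(p-1)p^{a-1}}(X;\Z_p) = 0$. Your proposal is honest about this --- you end by predicting only a conditional result --- and your overall strategy (bootstrap vanishing of Steenrod powers from an initial vanishing, then invoke a Liulevicius-type secondary decomposition) is exactly the route the paper takes in Section~\ref{sec:TheoremB}. So as a plan for the partial theorem, you are on the right track.

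Two concrete corrections, though. First, the vanishing hypothesis you guess, $H^{k/2}(X;\Z_p) = 0$, is not the correct odd-prime analogue of the mod~2 condition. At $p = 2$ the operation $\Sq^{k/2}$ lands in $H^{3k/2} \cong H^{k/2}$, so vanishing there kills $\Sq^{k/2}(x)$. At odd $p$ the power $P^i$ raises degree by $2i(p-1)$, and the relevant initial vanishing is of $P^{(l-1)p^a + p^{a-1}}(x)$ for $1 \le l \le \lambda$; by periodicity these all land in degree congruent to $2(p-1)p^{a-1}$ modulo $k$, which is the group the paper assumes vanishes. Second, the presence of the parameter $\lambda$ (absent at $p=2$, where $k = 2^a$) means the induction in the paper's Claims~1 and~2 must run simultaneously over all $1 \le l \le \lambda$, not just over a single index; your sketch does not anticipate this, and the Adem-relation bookkeeping is accordingly more delicate than you suggest. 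Finally, note that the paper also needs a separate argument (Claim~3) to show $\beta(x) = 0$ before the secondary decomposition applies, a step with no mod~2 analogue that your outline omits.
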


By \cite[Proposition 2.1]{Kennard13}, it is known that the minimum period is of the form $k = 2 \lambda p^a$ where $\lambda \mid p - 1$ and $a \geq 0$. Here we prove $a = 0$ under an additional vanishing assumption similar in analogy with Theorem \ref{thm:2}.

\begin{main}\label{thm:p}
In the setting of Conjecture \ref{con:p}, if $k = 2 \lambda p^a$ for some divisor $\lambda$ of $p - 1$ and some $a \geq 1$, then $H^{2 (p - 1) p^{a-1}}(X; \Z_p) \neq 0$.
\end{main}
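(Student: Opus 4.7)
The plan is to argue by contradiction: assume $a \geq 1$ and $H^{2(p-1)p^{a-1}}(X;\Z_p) = 0$, and derive a contradiction. The first reduction is a direct periodicity argument mirroring the one given for Theorem~\ref{thm:2}: since $P^{p^{a-1}}$ raises degree by $2(p-1)p^{a-1}$, the image $P^{p^{a-1}}(x)$ lies in $H^{k+2(p-1)p^{a-1}}(X;\Z_p)$, which via multiplication by $x$ is isomorphic to $H^{2(p-1)p^{a-1}}(X;\Z_p) = 0$. Thus the vanishing hypothesis translates into the single primary identity $P^{p^{a-1}}(x) = 0$, the odd-prime analogue of the identity $\Sq^{k/2}(x) = 0$ used to prove Theorem~\ref{thm:2}.

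Next I would follow the structural template of the mod-$2$ argument, adapted to the odd-prime Steenrod algebra. The divisibility constraint $k = 2\lambda p^a$ from \cite[Proposition 2.1]{Kennard13}, combined with $k$-periodicity, already kills many Steenrod powers on $x$ purely for degree reasons. Starting from $P^{p^{a-1}}(x) = 0$, and using the Adem relations for odd primes together with periodicity, I would derive further vanishing statements for $P^i(x)$ and for the mixed operations $\beta P^i(x)$ across a range of $i$ determined by $a$, propagating each identity through iterates and periodic shifts. The objective is to accumulate enough primary vanishing on $x$ that the odd-prime analogues of Adams' secondary cohomology operations---built from relations of the form $\sum_i a_i\, P^{p^i} \equiv 0$ in the mod-$p$ Steenrod algebra, as developed by Liulevicius and Shimada--Yamanoshita---are defined on $x$ with indeterminacy that periodicity keeps under control.

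With those secondary operations in hand, I would evaluate them on $x$ in two ways: directly from the defining Steenrod algebra relation combined with the primary vanishings just established, and indirectly via the periodicity isomorphisms $H^{k+j}(X;\Z_p) \cong H^j(X;\Z_p)$, which pin down the image in terms of very low-degree cohomology. Comparing the two computations should force $a = 0$, contradicting $a \geq 1$ and completing the proof. This is the same basic mechanism Adams uses to force $k \in \{1,2,4\}$ in the singly generated setting; the new feature here, as in Theorem~\ref{thm:2}, is that the periodic structure of $H^*(X;\Z_p)$ replaces the wholesale vanishing of intermediate cohomology as the source of constraints on the secondary operations.

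The main obstacle is the middle step: upgrading the single identity $P^{p^{a-1}}(x) = 0$ into enough vanishing of primary operations to make the necessary secondary operations defined with sufficiently small indeterminacy to conclude. At odd primes this is substantially more delicate than at $p = 2$, because one must simultaneously track the Bockstein $\beta$ and navigate Adem relations of the form $P^iP^j = \sum c_{i,j,t}\, P^{i+j-t}P^t$ (with binomial coefficients read modulo $p$) and their $\beta$-twisted counterparts. A secondary concern is verifying that the indeterminacies of the resulting secondary operations remain inside cohomology groups that periodicity has already rendered accessible; managing that interplay, rather than the final algebraic contradiction, is where I expect most of the technical work to lie.
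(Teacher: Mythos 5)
Your plan matches the paper's strategy in outline: argue by contradiction, turn the vanishing hypothesis into primary vanishing on $x$ via periodicity, bootstrap with Adem relations and the Cartan formula, then apply a secondary decomposition of $P^{p^a}$ in the spirit of Liulevicius and Shimada--Yamanoshita and contradict the minimality of the period. So the route is right. Two points where your framing undersells what is actually needed to make the bootstrap close. First, the vanishing hypothesis does not yield just the single identity $P^{p^{a-1}}(x)=0$. Because $\lambda \mid p-1$, $k$-periodicity immediately forces the whole family $P^{(l-1)p^a+p^{a-1}}(x)=0$ for every $1\le l\le\lambda$; the paper's inductive Claims 1 and 2 sweep the index $i$ through the blocks $(l-1)p^a < i < lp^a$ for each $l$ and use the $l$th member of that family as the base case of the $l$th strand, and the binomial-coefficient nonvanishing that keeps the Adem relations usable depends on placing $i$ in the correct block. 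Starting from only the $l=1$ case, as you propose, would leave the induction without footholds in the higher blocks. Second, for the secondary operation of \cite{Harper02} to apply you must also prove $\beta(x)=0$ (the paper's Claim 3), which requires a separate Adem relation involving $\beta P^{k/2-1}$; your reference to ``mixed operations $\beta P^i(x)$'' gestures at this but does not isolate the one identity actually needed. Finally, the contradiction at the end is not that $a=0$ directly: one shows $P^{p^a}(x)$ is a nonzero multiple of $x^{1+(p-1)/\lambda}$, decomposes the right-hand side down to a single $\beta(z)$ or $P^{p^j}(z)$ with $j\le a-1$, and then invokes the factoring lemmas of \cite{Kennard13,Kennard17} to contradict minimality of $k$. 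You correctly flag the bootstrap as the crux; these are the missing pieces that make it go through.
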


In the special case where $H^i(X; \Z_p) = 0$ for all $0 < i < k$, where $k$ is the minimum period, one recovers the classical work of Liulevicius \cite{Liulevicius62} and Shimada-Yamanoshita \cite{ShimadaYamanoshita61} who proved the odd prime analogue of Adams' theorem on singly generated cohomology rings.

More generally, the proof of Theorem \ref{thm:p} shows that any counterexample $X$ to Conjecture \ref{con:p} satisfies $k = 2 \lambda p^a$ for some $\lambda \mid p - 1$ and $a \geq 1$ and $P^{(l-1)p^a + p^{a-1}} \neq 0$ on the group $H^k(X; \Z_p) \cong \Z_p$ for some $1 \leq l \leq \lambda$. This assumption is more general since the images of these Steenrod powers land in the group $H^{2 \lambda p^a + 2 (p - 1) (l - 1) p^a + 2 (p - 1) p^{a-1}}(X; \Z_p)$, which by periodicity is isomorphic to $H^{2 (p - 1) p^{a - 1}}(X; \Z_p)$.

The condition of periodic cohomology arises in Riemannian geometry. In a foundational paper, Wilking \cite{Wilking03} proved that totally geodesic inclusions of positively curved Riemannian manifolds are highly connected. In special situations, this yields $n$-connected inclusions of closed, orientable manifolds $N^n \to M^{n+k}$ where $k \leq n$. Combined with Poincar\'e duality, this implies the existence of an element $x \in H^k(M; \Z)$ such that multiplication by $x$ induces an isomorphism $H^i(M; \Z) \to H^{i+k}(M; \Z)$ for all $0 < i < n$. Moreover the map from $H^0(M; \Z)$ is surjective and the map into $H^{n+k}(M;\Z)$ is injective. Passing to coefficients in $\Z_2$, one finds that $M$ is a $\Z_2$-homology sphere or has $k$-periodic cohomology in a sense similar to Definition \ref{def:periodicity}.

In \cite[Theorem C]{Kennard13}, the second author used Steenrod's cohomology operations to refine the $\Z_p$ periodicity statements for all primes $p$ and proved, assuming further that $2k \leq n$, that $M$ is a rational homology sphere or has four-periodic rational cohomology. Together with vanishing results for the first and third Betti numbers in this setting, one obtains vanishing results for all odd Betti numbers. These are key steps in the verification by the second author, Wiemeler, and Wilking of Hopf's Euler Characteristic Positivity Conjecture for Riemannian metrics with isometry group of rank at least five (see \cite{KWW1}). The conjecture goes back to the 1930s, and this was the first evidence involving symmetry groups whose rank is not required to grow to infinity in the manifold dimension.

In \cite{Nienhaus-pre}, Nienhaus improved \cite[Theorem C]{Kennard13} in the special case where the periodicity is induced by a $n$-connected inclusion $N^n \subseteq M^{n+k}$ of closed, oriented manifolds. His technique uses characteristic classes and in particular allows one to drop the assumption of simply connected. In the further special case where the normal bundle to $N$ admits a complex structure, which can often be arranged in geometric applications, Nienhaus also relaxes the codimension assumption from $2k \leq n$ in \cite[Theorem C]{Kennard13} to $k \leq n$. As an application, Nienhaus proved one can relax the torus rank assumption in \cite{KWW1} from five to four. The significance of this advance is highlighted by the fact that the proof technique in both papers fails for rank two. Therefore Nienhaus' proof is in a sense either optimal already or only off by one.

\subsection*{Acknowledgements} 
This work was supported by NSF Grant DMS-2402129, the Simons Foundation's TSM program, and NSF Grant DMS-1928930 while the second author was in residence at SLMath in Berkeley, California, in Fall 2024.

\bigskip
\section{Classical proof in the singly generated case}\label{sec:ClassicalProof}
\bigskip

Let $X$ be a space with $H^*(X;\Z_2) \cong \Z_2[x]/(x^{q+1})$ for some $q \geq 2$ or $q = \infty$. Adem showed that, if $k \neq 2^a$, then there is a universal decomposition
	\[\Sq^k = \sum_{0 < i < k} \Sq^i \circ a_{i}\]
for some Steenrod algebra elements $a_{i}$. Evaluating on $x$, we conclude
	\[x^2 = \sum_{0 < i < k} \Sq^i(w_i)\]
for some cohomology elements $w_i \in H^{2k - i}(X;\Z_2)$. All of these groups are zero for $0 < i < k$, so $x^2 = 0$, a contradiction to $q \geq 2$.

Now assume $k = 2^a \geq 16$. Note that $\Sq^{2^i}(x) = 0$ for all $0 < 2^i < k$. Adams showed that there is a secondary decomposition 
	\[\Sq^{2^a} = \sum_{0 < 2^i < 2^a} \Sq^{2^i} \circ \Phi_i\]
that holds on the subspace
	\[H^{2^a}(X;\Z_2) \cap \bigcap_{0 < 2^i < 2^a} \ker(\Sq^{2^i})\]
and lands in the quotient 
	\[H^{2^{a+1}}(X;\Z_2)/\bigoplus_{0 < 2^i < 2^a} \im(\Sq^{2^i}).\] 
Evaluating on $x$, we get a conditional relation of the form
	\[x^2 = \sum_{0 < 2^i < k} \Sq^{2^i}(w_i),\]
where $w_i \in H^{2^{a+1}-2^i}(X;\Z_2)$. Once again, since these groups are zero, the $w_i$ are zero, and we have a contradiction.

For the case $k = 8$, it suffices to derive a contradiction if $x^3 \neq 0$. For degree reasons, $\Sq^1(x) = \Sq^2(x) = \Sq^4(x) = 0$. Following Gon\c calves \cite{Goncalves77}, we find that there are secondary cohomology operations $\Phi'_{0,3}$ and $\Phi_i$ satsifying the following two properties:
	\begin{enumerate}
	\item $\Phi_{0,3}'(\Sq^8(u)) = \Sq^{16}(u) + \sum_{0 < 2^i < 8} \Sq^{2^i}(\Phi_i(u))$ for all $u \in \bigcap_{0 < 2^i < 8} \ker(\Sq^{2^i})$ that holds modulo $\bigoplus_{0 < 2^i \leq 8} \im(\Sq^{2^i})$. 
	\item $\Phi_{0,3}'(u^2) = u^3$ modulo $\bigoplus_{0 < 2^i \leq 8} \im(\Sq^{2^i})$ for $u \in H^8(X;\Z_2) \cap \ker(\Sq^1, \Sq^2,\Sq^4)$.
	\end{enumerate}
Taking $u = x$ in the first and second equations, we conclude that
	\[x^3 = \sum_{0 < 2^i \leq 8} \Sq^{2^i}(w_i)\]
for some $w_i \in H^{24 - 2^i}(X;\Z_2)$ (see also \cite[Corollary 1.3]{LinWilliams90} for a refined decomposition). Most terms on the right-hand side vanish as in the previous case, so we have $x^3 = \Sq^8(w)$ for some $w \in H^{16}(X;\Z_2)$. This group is generated by $x^2$, so we again have a contradiction since $\Sq^8(x^2) = \of{\Sq^4(x)}^2 = 0$.

\bigskip
\section{Results on the general case}\label{sec:TheoremA}
\bigskip

Throughout this section, let $X$ be a connected topological space with $k$-periodic $\Z_2$-cohomology. We abbreviate $H^i(X;\Z_2)$ by $H^i$. We make the following assumptions:
	\begin{enumerate}
	\item $x \in H^k$ is non-zero and induces periodicity. In particular,
		\begin{enumerate}
		\item $H^0 \cong H^k \cong H^{2k} \cong\ldots$, and these groups are generated by powers of $x$.
		\item $H^i$ is arbitrary for $0 < i < k$.
		\item All other groups are determined by the isomorphisms $H^i \to H^{i+k}$ given by multiplication by $x$. In particular, any $y \in H^i$ with $i \geq k$ factors as $xy'$ for some $y' \in H^{i-k}$.
		\end{enumerate}
	\item The degree $k$ is minimal.
	\end{enumerate}

Detailed proofs of the first three lemmas below can be found in \cite{Kennard13}.

\begin{lemma}
By minimality, neither $x$ nor $x^2$ factors as $yz$ with $0 < \deg(y) < k$.
\end{lemma}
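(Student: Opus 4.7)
The plan is to argue by contradiction: any non-trivial factorization of $x$ (or of $x^2$) should produce a periodicity element of degree strictly less than $k$, violating minimality. I will treat the factorization of $x$ first and then reduce the $x^2$ case to it.

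Suppose $x = yz$ with $0 < i := \deg(y) < k$, so $\deg(z) = k - i$ also lies strictly between $0$ and $k$. Multiplication by $x$ on $H^*$ factors as multiplication by $y$ followed by multiplication by $z$; since multiplication by $x \colon H^j \to H^{j+k}$ is an isomorphism for every $j \ge 0$, the first factor must be injective and the second surjective. Applying the factorization $x = zy$ in the opposite order gives, symmetrically, that multiplication by $y \colon H^{j+k-i} \to H^{j+k}$ is surjective for every $j \ge 0$. The target is to upgrade this to: multiplication by $y \colon H^j \to H^{j+i}$ is an isomorphism for every $j \ge 0$, which would make $y$ a nonzero periodicity element of degree $i < k$ and contradict minimality of $k$.

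The main obstacle is extending surjectivity of multiplication by $y$ to the small range $0 \le j < k - i$, where the observations above do not give it for free. Given $w \in H^{j+i}$ in this range, I would consider $xw \in H^{j+i+k}$ and apply the already-known surjectivity of multiplication by $y$ in degree $j+k$ to obtain $u \in H^{j+k}$ with $yu = xw$. Periodicity lets me write $u = xu'$ for a unique $u' \in H^j$, so $x(yu') = yu = xw$, and injectivity of multiplication by $x$ on $H^{j+i}$ forces $w = yu'$. This establishes surjectivity in every degree and finishes the contradiction for the factorization of $x$.

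The $x^2$ statement reduces to the $x$ statement: if $x^2 = yz$ with $0 < \deg(y) = i < k$, then $\deg(z) = 2k - i > k$, so periodicity lets me write $z = xz'$ with $z' \in H^{k-i}$ satisfying $0 < \deg(z') < k$; then $x^2 = x(yz')$, and injectivity of multiplication by $x$ yields $x = yz'$, reducing to the case already handled. No secondary operations or additional ring-theoretic input beyond periodicity itself is needed.
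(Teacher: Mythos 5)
Your proposal is correct and follows the same core approach as the paper's sketch: any nontrivial factorization of $x$ or $x^2$ forces a factor $y$ of degree $i$ with $0 < i < k$ to itself induce periodicity, contradicting minimality. The paper's sketch merely asserts this (deferring to \cite{Kennard13} for details), whereas you supply the nontrivial step it glosses over---extending surjectivity of multiplication by $y$ to the range $0 \le j < k-i$ by lifting $xw$ through periodicity and then cancelling $x$---and you cleanly reduce the $x^2$ case to the $x$ case by factoring out one $x$ from $z$, rather than arguing periodicity of $y$ directly from the $x^2$-factorization; both are valid, and your version is the more careful one.
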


\begin{proof}[Proof sketch]
The definition of periodicity implies that, if $x$ or $x^2$ equals $yz$, then $y$ too induces periodicity. (Multiplication by $x$ equals multiplication by $y$ and then by $z$.)
\end{proof}

\begin{lemma}\label{lem:2}
There is no $y$ with $\deg(y) < k$ such that $x = \Sq^i(y)$. Similarly, there is no $y$ with $\deg(y) \not\in\{k,2k\}$ such that $x^2 = \Sq^i(y)$.
\end{lemma}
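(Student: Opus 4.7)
The plan is to prove both claims by case analysis using three tools: instability of Steenrod squares ($\Sq^j(u) = 0$ for $j > \deg u$, and $\Sq^{\deg u}(u) = u^2$); the Cartan formula together with the characteristic-$2$ identity $\Sq^{2m}(u^2) = (\Sq^m u)^2$; and periodicity, which lets us write $\Sq^j(x) = x\,u_j$ for a unique $u_j \in H^j$ (with $u_0 = 1$) and permits cancellation of $x$ from either side of any identity in $H^*$. The second claim will reduce to the first, so the main work is in the first.

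For the first claim, I suppose for contradiction that $x = \Sq^i(y)$ for some $y \in H^{k-i}$ with $0 < k - i < k$, and I choose $i$ \emph{maximal} among such instances. If $i > k/2$, then $i > \deg y$, so $\Sq^i(y) = 0$ by instability, contradicting $x \neq 0$. If $i = k/2$, then $\Sq^{k/2}(y) = y^2 = x$ exhibits $x$ as $y \cdot y$ with $0 < \deg y < k$, contradicting the preceding lemma. The delicate case is $0 < i < k/2$. Here $y^2 = \Sq^{k-i}(y) \in H^{2k-2i}$, and periodicity supplies $z \in H^{k-2i}$ with $y^2 = xz$ and $0 < \deg z < k$. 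Combining $\Sq^{2i}(y^2) = (\Sq^i y)^2 = x^2$ with the Cartan expansion of $\Sq^{2i}(xz)$ and the identity $\Sq^j(x) = x\,u_j$, I get
\[
x^2 \;=\; x \cdot \Bigl( \Sq^{2i}(z) \;+\; \sum_{j=1}^{2i} u_j\, \Sq^{2i-j}(z) \Bigr).
\]
Cancelling $x$ yields $x = \Sq^{2i}(z) + \sum_{j=1}^{2i} u_j \Sq^{2i-j}(z)$. Since $i < 2i < k$, the maximality of $i$ excludes $\Sq^{2i}(z) = x$, forcing $\Sq^{2i}(z) = 0$ (as $H^k$ is one-dimensional). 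This leaves $x$ equal to a sum of products $u_j \Sq^{2i-j}(z)$, each having both factors of positive degree strictly less than $k$. Since $H^k = \langle x \rangle$, at least one such product equals $x$, contradicting the preceding lemma.

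For the second claim, suppose $x^2 = \Sq^i(y)$ with $\deg y \notin \{k, 2k\}$. Instability rules out $i > \deg y$, and $i = 0$ or $i = \deg y$ would force $\deg y \in \{2k, k\}$, which is excluded. Thus $0 < i < k$ and $\deg y = 2k - i > k$, so periodicity gives $y = xy'$ with $0 < \deg y' = k - i < k$. Expanding $\Sq^i(xy')$ by Cartan, substituting $\Sq^j(x) = x\,u_j$, and cancelling $x$ yields
\[
x \;=\; \Sq^i(y') \;+\; \sum_{j=1}^{i} u_j\, \Sq^{i-j}(y').
\]
By the first claim (just established), $\Sq^i(y') \neq x$, hence $\Sq^i(y') = 0$, and the remaining sum contradicts the preceding lemma exactly as above.

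The main obstacle is the case $0 < i < k/2$ of the first claim: the preceding lemma does not immediately contradict $x = \Sq^i(y)$, so I must iterate the Cartan-expansion/periodicity trick, and choosing $i$ maximal is essential in order to rule out the potentially troublesome term $\Sq^{2i}(z) = x$.
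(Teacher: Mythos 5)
Your proof is correct and follows essentially the same route as the paper: choose $i$ maximal, dispose of $i\geq k/2$ by instability, square via $\Sq^{2i}(y^2)=(\Sq^i y)^2$, expand with the Cartan formula and periodicity, cancel $x$, kill the leading term $\Sq^{2i}(z)$ by maximality, and read off a nontrivial factorization of $x$ contradicting the preceding lemma; the second claim is reduced to the first in the same way. (The paper's sketch writes ``$y^2 = xy'$'' where the degree count suggests ``$y = xy'$'' as in your argument, but the intended reduction is the one you carry out.)
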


\begin{proof}[Proof sketch]
If there is such a decomposition, choose one with maximal $i$. Square both sides, and apply the Cartan formula and periodicity multiple times to conclude that $x$ factors. By the previous lemma, this is a contradiction.

To prove the second claim, note that $x^2 \neq 0$ by periodicity. Hence $\deg(y) > k$ and $y^2 = x y'$ by periodicity with $0 < \deg(y') < k$. By using the Cartan formula and periodicity, we then factor out and cancel an $x$. This reduces the claim to the first part of the lemma.
\end{proof}

\begin{lemma}\label{lem:3}
The degree $k$ is a power of two.
\end{lemma}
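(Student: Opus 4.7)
The plan is to assume $k$ is not a power of two and derive a contradiction with Lemma~\ref{lem:2} using Adem's decomposability theorem for $\Sq^k$ in the Steenrod algebra.

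When $k$ is odd, the Adem relations give the single-term identity $\Sq^1\circ\Sq^{k-1}=\Sq^k$, so
	\[x^2 = \Sq^k(x) = \Sq^1(\Sq^{k-1}(x)).\]
Here $\Sq^{k-1}(x)\in H^{2k-1}$ and $2k-1\notin\{k,2k\}$ for $k\ge 3$, so this directly contradicts Lemma~\ref{lem:2}.

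When $k=2^am$ with $a\ge 1$ and $m>1$ odd, the plan is to start from an Adem decomposition $\Sq^k=\sum_j\Sq^{a_j}\circ\beta_j$ with $0<a_j<k$ and each $\beta_j$ of positive Steenrod degree $k-a_j$. Applied to $x$, each intermediate class $\beta_j(x)\in H^{2k-a_j}$ factors via periodicity as $xu_j$ with $u_j\in H^{k-a_j}$. Expanding $\Sq^{a_j}(xu_j)$ by the Cartan formula, and using periodicity once more to write $\Sq^p(x)=xs_p$ with $s_p\in H^p$, the identity $x^2=\sum_j\Sq^{a_j}(\beta_j(x))$ reduces to an identity in $H^k\cong\Z_2$ involving the $u_j$, the $s_p$, and Steenrod operations of these classes. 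The plan is then to iterate, applying Adem relations and Cartan expansions to the intermediate expressions until one isolates a single summand of the form $\Sq^i(y)=x^2$ with $\deg y\notin\{k,2k\}$, which again contradicts Lemma~\ref{lem:2}.

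The main obstacle will be the multi-term nature of the Adem decomposition in the general case: Lemma~\ref{lem:2} forbids only single-term Steenrod images of intermediate classes of forbidden degree. One must therefore choose the Adem relations and iterate the reduction carefully, so that all but one summand either cancels or is itself handled by a similar reduction, rather than producing only a vacuous identity in $H^k\cong\Z_2$. Full details appear in \cite[Proposition~1.3]{Kennard13}.
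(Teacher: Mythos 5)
Your overall strategy — Adem decomposability of $\Sq^k$ when $k$ is not a power of two, combined with Lemma~\ref{lem:2} — is exactly the paper's approach, and your treatment of the odd case via the explicit relation $\Sq^1\Sq^{k-1}=\Sq^k$ is correct and complete.

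However, the ``main obstacle'' you flag in the general case is not actually an obstacle, and the iterative Adem/Cartan reduction you propose in response to it is unnecessary. The key observation, which the paper's sketch encodes in the phrase ``Use the fact that $x^2$ generates $H^{2k}$,'' is that $H^{2k}(X;\Z_2)\cong\Z_2$: the composite $H^0\to H^k\to H^{2k}$ is an isomorphism by periodicity and $X$ is connected. Once you have the identity
\[
x^2 = \Sq^k(x) = \sum_j \Sq^{a_j}\bigl(\beta_j(x)\bigr), \qquad 0 < a_j < k,
\]
each summand $\Sq^{a_j}(\beta_j(x))$ lies in $H^{2k}\cong\Z_2$ and hence equals either $0$ or $x^2$. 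Since the total is $x^2\neq 0$, at least one summand equals $x^2$, say $\Sq^{a_j}(w_j)=x^2$ with $w_j=\beta_j(x)\in H^{2k-a_j}$. Because $0<a_j<k$, we have $k<\deg w_j<2k$, so $\deg w_j\notin\{k,2k\}$, and Lemma~\ref{lem:2} applies directly. No Cartan expansion of $\Sq^{a_j}(xu_j)$, no isolation of a single surviving summand through iterated Adem relations, and no worry about vacuous identities is required — the one-dimensionality of $H^{2k}$ does all the work. You should replace the second half of your proof with this observation; as written, it proposes a genuinely harder (and incompletely specified) reduction where a one-line argument suffices.
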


\begin{proof}[Proof sketch]
If $k$ is not a power of two, then $\Sq^k$ decomposes by the Adem relations. Evaluate such a relation on $x$. Use the fact that $x^2$ generates $H^{2k}$ and Lemma \ref{lem:2} to derive a contradiction.
\end{proof}

Guided by Adams \cite{Adams60}, we have a two-step strategy to prove the main estimate $k = 2^a \leq 8$:
	\begin{itemize}
	\item (Step 1) Prove that $\Sq^i(x) = 0$ for all $0 < i < k$.
	\item (Step 2) Assuming $k \geq 16$, conclude the existence of a conditional relation $\Sq^k(x) = \sum_{0 < i < k} \Sq^i(w_i)$ coming from a decomposition of $\Sq^k(x) = x^2$ in terms of secondary cohomology operations, and then conclude a contradiction.
	\end{itemize}

As we show below, Step 2 can be carried out with no problem. The issue is to prove that $\Sq^i(x) = 0$ for all $0 < i < k$ in Step 1. As it turns out, we can prove this for all $i$ under the assumption that it holds for one particular $i$, namely $i = \frac k 2$. The main technical lemma that implies our theorem is the following:

\begin{lemma}\label{lem:4}
If $\Sq^{k/2}(x) = 0$, then $k \in \{1,2,4,8\}$. Moreover, $k = 8$ only if $x^3 = 0$.
\end{lemma}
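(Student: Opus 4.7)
The plan is to implement the two-step strategy from Section \ref{sec:ClassicalProof} with the modifications needed to handle the extra cohomology in degrees $0 < i < k$. Step 1 establishes $\Sq^i(x) = 0$ for all $0 < i < k$, and Step 2 invokes Adams' secondary operation decomposition (or Gon\c calves' refinement when $k = 8$) to derive a contradiction. Lemma \ref{lem:3} ensures $k$ is a power of two, so the only permitted outcomes are the claimed $k \in \{1,2,4,8\}$.

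The crux is Step 1. By Lemma \ref{lem:3}, $k = 2^a$, and any $\Sq^i$ with $i$ non-power-of-two decomposes via Adem relations into sums of the form $\sum \Sq^j \circ a_j$ with $0 < j < i$. It therefore suffices to show $\Sq^{2^j}(x) = 0$ for all $0 \leq j \leq a-1$. The top case $j = a-1$ is precisely the hypothesis of the lemma. For $j < a-1$, periodicity assumption (1)(c) lets us write $\Sq^{2^j}(x) = x w_j$ for some $w_j \in H^{2^j}$, and the task becomes to show $w_j = 0$. The plan is to combine (i) Adem relations expressing $\Sq^{k/2} \Sq^{2^j}$ and related compositions in the admissible basis, (ii) the hypothesis $\Sq^{k/2}(x) = 0$, (iii) the Cartan formula applied to $\Sq^{2^j}(x) = x w_j$, and (iv) Lemma \ref{lem:2} to block spurious factorizations of $x$ or $x^2$ through Steenrod squares. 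A downward induction on $j$ is the natural framework.

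With Step 1 in hand, Step 2 adapts the classical argument. For $k \geq 16$, $x$ lies in $\bigcap_{0 < 2^i < k} \ker(\Sq^{2^i})$, so Adams' secondary decomposition yields
\[
x^2 = \sum_{0 < 2^i < k} \Sq^{2^i}(w_i), \qquad w_i \in H^{2k - 2^i}.
\]
Periodicity writes each $w_i = x v_i$ with $v_i \in H^{k - 2^i}$. The Cartan formula together with Step 1 kills every cross-term involving $\Sq^j(x)$ for $0 < j < k$, reducing $\Sq^{2^i}(w_i)$ to $x \Sq^{2^i}(v_i)$. Cancelling the common factor of $x$ (permissible because multiplication by $x$ is an isomorphism) yields
\[
x = \sum_{0 < 2^i < k} \Sq^{2^i}(v_i), \qquad \deg(v_i) = k - 2^i < k,
\]
which contradicts a version of Lemma \ref{lem:2} extended from single Steenrod squares to sums (see below). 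The $k = 8$ case with $x^3 \neq 0$ is treated identically, but using Gon\c calves' operation $\Phi'_{0,3}$ in place of Adams' decomposition: properties (1) and (2) in Section \ref{sec:ClassicalProof} give $x^3 = \sum_{0 < 2^i \leq 8} \Sq^{2^i}(w_i)$ with $w_i \in H^{24 - 2^i}$, periodicity rewrites each $w_i$ as $x^2 v_i$ with $v_i \in H^{8 - 2^i}$, and the same Cartan-plus-Step-1 simplification leads to $x = \sum \Sq^{2^i}(v_i)$ with $\deg(v_i) < k = 8$, yielding the same contradiction.

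The principal obstacle is Step 1: propagating the single vanishing hypothesis $\Sq^{k/2}(x) = 0$ to all $a-1$ smaller powers-of-two Steenrod squares on $x$ using only Adem relations, the Cartan formula, and the restricted periodicity structure. A secondary issue in Step 2 is the needed strengthening of Lemma \ref{lem:2} from single operations to sums of Steenrod squares acting on classes of degree $< k$; I expect this to follow by squaring the sum expression for $x$, applying Cartan and periodicity to reduce each squared term, and then invoking the original Lemma \ref{lem:2} termwise, but the bookkeeping will need care.
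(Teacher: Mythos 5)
Your high-level plan (Step 1: kill all $\Sq^i(x)$ for $0 < i < k$; Step 2: invoke a secondary decomposition and contradict Lemma \ref{lem:2}) is the right strategy, and your Step 2, once Step 1 is in hand, is essentially sound. In fact the ``secondary issue'' you flag about extending Lemma \ref{lem:2} to sums is not a real obstacle: the equation $x^2 = \sum_i \Sq^i(y_i)$ lives in $H^{2k} \cong \Z_2$, so over $\Z_2$ at least one summand must equal $x^2$, and Lemma \ref{lem:2} applies to that single term directly. This is exactly what the paper does. Your extra step of cancelling a factor of $x$ is harmless but unnecessary, and if you do cancel, the same one-dimensionality argument applied in $H^k \cong \Z_2$ closes the gap without any squaring or bookkeeping. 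For $k = 8$ your reduction also works because the $\Sq^8$ term on a degree-$0$ class drops out by instability (the paper instead notes $\Sq^8(x^2) = (\Sq^4 x)^2 = 0$ and that $x_4^2 = 0$ by minimality); either observation suffices.

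The genuine gap is Step 1, which you correctly identify as the crux but leave as a plan that does not in fact work as stated. The paper's argument (Section \ref{sec:Calculations}) proves $\Sq^i(x) = 0$ for \emph{all} $0 < i < k$, not just powers of two, and does so via two \emph{upward} inductions with carefully chosen Adem relations. First, for $\frac{k}{2} \le i < k$, write $i = \frac{k}{2} + \delta$ and use
\[
\Sq^\delta \Sq^{k/2} = \Sq^i + \sum_{0 < j \le \delta/2} c_j \Sq^{i-j}\Sq^j,
\]
evaluating on $x$, expanding $\Sq^{i-j}\Sq^j(x)$ via $\Sq^j(x) = x x_j$ and Cartan, and observing that every $\Sq^{i-j-h}(x)$ that appears has $\frac{k}{2} \le i-j-h < i$ so vanishes by the hypothesis plus the induction hypothesis. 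Second, for $0 < i < \frac{k}{2}$, use the relation
\[
\Sq^{2i}\Sq^{k-i} = c_0 \Sq^{k+i} + \sum_{0 < j < i} c_j \Sq^{k+i-j}\Sq^j + \Sq^k\Sq^i;
\]
the left side vanishes on $x$ because $\frac{k}{2} < k-i < k$ (using the first stage), the degree-$(k+i)$ term vanishes by instability, the middle terms vanish by induction on $i$, and the surviving $\Sq^k\Sq^i(x)$ is then unpacked via Cartan and periodicity to force $\Sq^i(x) = 0$. Your proposed ``downward induction on $j$'' through the powers-of-two degrees, built around $\Sq^{k/2}\Sq^{2^j}$, does not follow this structure: for $2^j < \frac{k}{2}$ that composition is not on the admissible side of the Adem relation in a way that isolates $\Sq^{2^j}(x)$, and the downward direction cannot supply the vanishing of the intermediate $\Sq^{i-j}\Sq^j(x)$ cross-terms, which is precisely what the two-stage upward induction is designed to provide. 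Restricting attention to powers of two is a valid reduction in principle (since the $\Sq^{2^j}$ generate the Steenrod algebra), but it does not simplify the inductive bootstrapping and in fact obscures the order in which the vanishing must be established. As written, Step 1 is an outline rather than a proof, and the outline points in the wrong direction.
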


\begin{proof}
Assume $k = 2^a \geq 16$ and $\Sq^{k/2}(x) = 0$. First, we induct over $i$ to prove that $\Sq^i(x) = 0$ for $\frac{k}{2} < i < k$, then we induct over $i$ again to prove that $\Sq^i(x) = 0$ for $0 < i < \frac{k}{2}$ (see Section \ref{sec:Calculations} for details.) Given these calculations, there is a conditional relation of the form
	\[x^2 = \Sq^k(x) = \sum_{0 < i < k} \Sq^i(y_i)\]
for some cohomology elements $y_i$ (see \cite{Harper02}). Since $x^2$ generates $H^{2k}$ by periodicity, we conclude $x^2 = \Sq^i(y_i)$ for some $i$, a contradiction to Lemma \ref{lem:2}.

Now assume $k = 8$ and that $x^3 \neq 0$. The assumption of the lemma states that $\Sq^4(x) = 0$, and it follows as in the previous case that $\Sq^i(x) = 0$ for $4 < i < 8$ and then for $0 < i < 4$. Hence there is a decomposition 
	\[x^3 = \sum_{0 < 2^i \leq 8} \Sq^{2^i}(w_i).\]
If some term is non-trivial with $0 < 2^i < 8$, then we have a contradiction to the minimality of $8 = \deg(x)$ by Lemma \ref{lem:2}. The $2^i = 8$ term is also zero since $H^{16}(X;\Z_2)$ is generated by $x^2$, which implies that $\Sq^8(x^2) = x^2 x_4^2$ where $\Sq^4(x) = xx_4$. But again $x_4^2 = 0$ by minimality since $x$ does not factor.
\end{proof}

To complete the proof of the conjecture, it would suffice to prove that $\Sq^{\frac k 2}(x) = 0$. On the other hand, perhaps the conjecture is false, and we take a moment to discuss what a counterexample might look like. Suppose $X$ is a space that has, for example, $16$-periodic but not $8$-periodic cohomology. Periodicity implies the existence of generators $1$, $x$, and $x^2$ in degrees $0$, $16$, and $32$, respectively. In addition, Lemma \ref{lem:4} implies that $\Sq^8(x) \neq 0$, since otherwise the period of $16$ is not minimal. Applying periodicity once more, we find that $\Sq^8(x) = x y_8 \neq 0$ for some element $y_8$ in degree $8$. Moreover, there must be additional non-zero groups $H^i(X; \Z_2)$ with $i \not\equiv 0 \bmod 8$, because otherwise one has $x y_8 \in \ker(\Sq^i)$ for $i \in \{1,2,4,8\}$ and hence a conditional relation of the form
	\[\Sq^{16}(x y_8) = \sum_{0 \leq i \leq 3} \Sq^{2^i}(x^2 z_{8-2^i}),\]
which implies that $x^2 y_8 = \Sq^8(x^2) = \Sq^4(x)^2 = 0$, a contradiction to periodicity. This shows that a counterexample to Conjecture \ref{con:2} with $k = 16$ would need to have non-trivial cohomology in some degrees $i \not\equiv 0 \bmod 8$. We further explore the structure of potential counterexamples in Section \ref{sec:PotentialCounterexamples}.

\bigskip
\section{Calculations}\label{sec:Calculations}\label{sec:Calculations} 
\bigskip

Fix $x \in H^k(M;\Z_2)$ as in Lemma \ref{lem:4}. We first show by induction that $\Sq^i(x) = 0$ for $\frac{k}{2} \leq i < k$. Note that $\Sq^{k/2}(x) = 0$ by assumption. Write $i = \frac{k}{2} + \delta$ where $0 < \delta < \frac{k}{2}$. Since $k$ is a power of two, there is an Adem relation of the form
	\[\Sq^\delta \Sq^{k/2} = \Sq^i + \sum_{0<j\leq\delta/2} c_j \Sq^{i-j} \Sq^j.\]
Evaluating on $x$, we see that $\Sq^i(x) = 0$ if $\Sq^{i-j}(\Sq^j(x)) = 0$ for all $0 < j \leq \frac{\delta}{2}$. To prove this, we use periodicity to write $\Sq^j(x) = xx_j$ for some $x_j \in H^j(M;\Z_2)$ and then the Cartan formula to obtain
	\[\Sq^{i-j}(\Sq^j(x)) = \sum_{0 \leq h \leq j} \Sq^{i-j-h}(x) \Sq^h(x_j).\]
Observe that $\frac{k}{2} \leq i - j - h < i$, so the assumption $\Sq^{\frac k 2}(x) = 0$ and the induction hypothesis imply that every term in this sum is zero. This concludes the proof.

Second, we show by induction that $\Sq^i(x) = 0$ for $0 < i < \frac{k}{2}$. We may assume that $\Sq^j(x) = 0$ for all $0 < j < i$. Since $2i < 2(k-i)$, there is an Adem relation of the form
	\[\Sq^{2i}\Sq^{k - i} = c_0 \Sq^{k+i} + \sum_{0< j < i} c_j \Sq^{k + i - j} \Sq^j + \Sq^k \Sq^i.\]
Evaluating on $x$, the left-hand side vanishes by the first step of the proof. The first term on the right-hand side vanishes because $k+i$ is larger than the degree of $x$, and the terms in the middle vanish for all $0 < j < i$ by the induction hypothesis. Therefore we have $\Sq^k(\Sq^i(x)) = 0$. On the other hand, we can write $\Sq^i(x) = xx_i$ by periodicity and apply the Cartan formula to obtain
	\[0 = \Sq^k(\Sq^i(x)) 
	    = x^2x_i + \sum_{0< j \leq i} \Sq^{k - j}(x) \Sq^j(x_i).\]
Since $\frac{k}{2} < k - j < k$, the first step of the proof implies that $0 = x^2 x_i$. By periodicity, we conclude $0 = xx_i = \Sq^i(x)$.

\bigskip
\section{Odd primes}\label{sec:TheoremB}
\bigskip

Fix an odd prime $p$, and let $X$ be a topological space with periodic $\Z_p$-cohomology. Specifically, we assume $k \geq 1$ is minimal such that there exists a non-zero $x \in H^k(X; \Z_p)$ such that the maps $H^i(X; \Z_p) \to H^{i+k}(X; \Z_p)$ induced by multiplication by $x$ are isomorphisms for all $i \geq 0$.

By the $\Z_p$ Periodicity Theorem \cite[Proposition 2.1]{Kennard13}, we have 
	\[k = 2 \lambda p^a\]
for some divisor $\lambda$ of $p - 1$ and some $a \geq 0$. The $\Z_p$ Periodicity Conjecture claims that $a = 0$. To prove Theorem \ref{thm:p}, we proceed by contradiction. Specifically, we assume that $a \geq 1$ and that
	\[H^{2(p-1)p^a}(X; \Z_p) = 0,\] 
and we derive a contradiction to the minimality of $k$. 

By $k$-periodicity and the fact that $\lambda \mid p - 1$, the vanishing of this cohomology group implies
	\[H^{k + 2(p-1)(l-1)p^a + 2(p-1)p^{a-1}}(X; \Z_p) = 0\]
for all $1 \leq l \leq \lambda$. In particular, 
	\begin{equation}\label{eqn:Pizero}
	\PP^{(l-1)p^a + p^{a-1}}(x) = 0.
	\end{equation}
for all $1 \leq l \leq \lambda$ since this element lies in a vanishing cohomology group. Similar to the proof strategy for Theorem \ref{thm:2}, we bootstrap this vanishing of $P^i(x)$ for certain degrees to derive additional vanishing in other degrees.

\medskip
\noindent
{\bf Claim 1}: $P^i(x) = 0$ for all $(l-1)p^a + p^{a-1} < i < l p^a$ and for all $1 \leq l \leq \lambda$.

\begin{proof}
Fix $i$ as in the claim, and write $i = (l - 1)p^a + p^{a-1} + \delta$ for some $1 \leq l \leq \lambda$ and some $0 < \delta < (p-1)p^{a-1}$. By induction, we may assume the result holds for all $i'$ of the form $i' = (l' - 1) p^a + p^{a-1} + \delta'$ with $1 \leq l' \leq \lambda$ and $0 < \delta' < \delta$.

A calculation shows that $\delta < p(i - \delta)$. Hence there exists an Adem relation
	\[P^\delta P^{i - \delta} = c_0 P^i + \sum_{0 < j \leq \frac \delta p} c_j P^{i - j} P^j\]
for some $c_j \in \Z_p$. Moreover we have by Lucas' theorem that
	\[c_0 = \pm \binom{(p-1)(i-\delta) - 1}{\delta} = \pm \prod_{j\geq 0} \binom{n_j}{\delta_j} \bmod p\]
where $\sum n_j p^j$ and $\sum \delta_j p^j$ are the $p$-adic expansions of $(p-1)(i-\delta) - 1$ and $\delta$, respectively. Notice the coefficients $n_j$ satisfy $n_{a-1} = p - 2$ and $n_{a-2} = \ldots = a_0 = p - 1$, and the coefficients $\delta_j$ satisfy $\delta_j = 0$ for $j \geq a$ and $\delta_{a - 1} \leq p - 2$ as a result of the bound $\delta < (p-1)p^{a-1}$. Therefore $\binom{n_j}{\delta_j} \not\equiv 0 \bmod p$ for all $j\geq 0$, and so $c_0 \neq 0$.

Evaluating the above Adem relation on $x$, we have zero on the left-hand side by Equation \eqref{eqn:Pizero}. On the right-hand side, the first term is a non-zero multiple of $P^i(x)$, so finishing the proof requires that we show $P^{i-j}(P^j(x)) = 0$ for all $0 < j \leq \delta/p$. To do this, write $P^j(x) = x x_{2(p-1)j}$ using periodicity and then
	\[P^{i-j}(P^j(x)) = \sum_{0 \leq h \leq i - j} P^{i - j - h}(x) P^h(x_{2(p-1)j})\]
by the Cartan formula. For $h > (p - 1)j$, we have $P^h(x_{2 (p - 1) j)}) = 0$. For $h < (p - 1)j$ or for $h = (p-1)j$ and $0 < j < \delta/p$, we have $(l-1)p^a + p^{a-1} < i - j - h < i$ and so our induction hypothesis implies $P^{i - j - h}(x) = 0$. Finally for $h = (p - 1)j$ and $j = \delta/p$, we have $P^{i - j - h}(x) = P^{i - \delta}(x) = 0$ by another application of Equation \eqref{eqn:Pizero}.
\end{proof}

\noindent
{\bf Claim 2}: $P^i(x) = 0$ for all $(l - 1)p^a < i < (l - 1)p^a + p^{a-1}$ and for all $1 \leq l \leq \lambda$.

\begin{proof}
Let $i = (l - 1)p^a + \epsilon$ for some $1 \leq l \leq \lambda$ and $0 < \epsilon < p^{a-1}$. By induction assume $P^{i'}(x) = 0$ for all $i'$ of the form $(l' - 1) p^a + \epsilon'$ with $1 \leq l' \leq \lambda$ and $0 < \epsilon' < \epsilon$.

Note that $p\epsilon < p\of{\lambda p^a - (p - 1)\epsilon}$, so there is an Adem relation of the form
	\[P^{p \epsilon} P^{\lambda p^a - (p-1)\epsilon} 
	= \sum_{0 \leq \epsilon' \leq \epsilon} c_{\epsilon'} P^{\lambda p^a + \epsilon - \epsilon'}(P^{\epsilon'}(x))\]
for some constants $c_{\epsilon'}$. The left-hand side vanishes by Claim 1 since 
	\[(\lambda - 1) p^a + p^{a - 1} < \lambda p^a - (p-1)\epsilon < \lambda p^a.\]
On the right-hand side, the $\epsilon' = 0$ term vanishes because $2\of{\lambda p^a + \epsilon} = k + 2\epsilon$ is larger than the degree of $x$, and the terms with $0 < \epsilon' < \epsilon$ vanish by our induction hypothesis. Therefore we have
	\[0 = P^{\lambda p^a}\of{P^\epsilon(x)}.\]
Writing $P^\epsilon(x) = x x_{2(p-1)\epsilon}$ by periodicity, we can apply the Cartan formula to conclude
	\[0 = x^p x_{2 (p-1) \epsilon} + \sum_{0 < h \leq \lambda p^a} P^{\lambda p^a - h}(x) P^h(x_{2(p-1)\epsilon}).\]
For the terms on the right-hand side with $0 < h < (p-1)p^{a-1}$, we have $P^{p^a - h}(x) = 0$ by Claim 1, and for terms with $h \geq (p-1) p^{a-1}$, we have $2h > 2 (p-1) \epsilon$ and hence $P^h(x_{2(p-1)\epsilon}) = 0$. Therefore $0 = x^p x_{2(p-1)\epsilon}$. By periodicity, we have $0 = x x_{2(p-1)\epsilon} = P^\epsilon(x)$. This finishes the proof that $P^i(x) = 0$ if $l = 1$.

Now we assume $1 < l \leq \lambda$. We have $\epsilon < p(l-1)p^a$, so we have an Adem relation
	\begin{equation}\label{eqn:Ademl}
	P^\epsilon P^{(l-1) p^a} = \sum_{0 \leq \epsilon' \leq \epsilon/p} c_{\epsilon'} P^{i-\epsilon'} P^{\epsilon'}.
	\end{equation}
Evaluating on $x$, we find that all terms on the right-hand side with $0 < \epsilon' \leq \epsilon/p$ vanish by our induction hypothesis. Moreover, the $\epsilon' = 0$ term has coefficient
	\[c_0 = \pm \binom{(p-1)(l-1)p^a - 1}{\epsilon},\]
which can be shown to be non-zero by an argument similar to the argument in Case 1. Therefore the proof of $P^i(x) = 0$ reduces to the claim that the left-hand side of this equation evaluates to zero. To prove this, we apply periodicity and the Cartan formula to see that
	\[P^\epsilon \of{P^{(l-1)p^a}(x)} = c P^\epsilon\of{x^{1 + l \frac{p-1}{\lambda}}} = c \sum P^{\epsilon_1}(x)\cdots P^{\epsilon_m}(x),\]
for some $c \in \Z_p$, where $m = 1 + l \frac{p-1}{\lambda}$ and the sum runs over $\epsilon_j \geq 0$ whose sum is $\epsilon$. By induction, $P^{\epsilon_j}(x) = 0$ unless $\epsilon_j \in \{0, \epsilon\}$. Similarly $P^{\epsilon}(x) = 0$ by the $l = 1$ case, so the right-hand side vanishes.
\end{proof}

\noindent
{\bf Claim 3}: $\beta(x) = 0$, where $\beta$ is the Bockstein homomorphism associated to the short exact sequence $0 \to \Z_2 \to \Z_4 \to \Z_2 \to 0$ of coefficient groups.

\begin{proof}
Because $k = 2 \lambda p^a$ with $a \geq 1$, we have $1 \leq p(k/2-1)$. Therefore there is an Adem relation 
	\[P^1 \beta P^{\frac k 2 -1} = - \beta P^{\frac k 2} + P^{\frac k 2} \beta.\]
Evaluating on $x$, the left-hand side vanishes by Claim 1, as does the first term on the right-hand side since
	\[\beta(P^{\frac k 2}(x)) = \beta(x^p) = p x^{p-1}\beta(x) = 0.\]
Therefore
	\[0 = P^{\frac k 2}(xx_1) = x^p x_1\]
by Cartan's formula and Claim 1, where we have written $\beta(x) = x x_1$ using periodicity. Therefore $0 = xx_1 = \beta(x)$, as claimed.
\end{proof}

Given Claims 1, 2, and 3, we derive a contradiction and therefore finish the proof of Theorem \ref{thm:p}. Taking $l = 1$ in Claims 1 and 2 as well as in Equation \eqref{eqn:Pizero}, we have $P^i(x) = 0$ for all $0 < i < p^a$. Since we also have $\beta(x) = 0$ by Claim 3, \cite[Theorem 6.2.1.b]{Harper02} implies that we have a conditional relation of the form
	\[P^{p^a}(x) = \beta(w) + \sum_{0 < i < p^a} P^i(w_i)\]
for some cohomology elements $w$ and $w_i$. 

First we claim the left-hand side is a non-zero multiple of $x^{1 + \frac{p-1}{\lambda}}$. Indeed, $P^{p^a}(x)$ lives in the one-dimensional group spanned by this power of $x$, so the claim follows if $P^{p^a}(x)$ is non-zero. To see this, we use that $P^{\lambda p^a}(x) = x^p \neq 0$ together with the Adem relation of the form
	\[P^{(\lambda - 1)p^a} P^{p^a} = c_0 P^{\lambda p^a} + \sum_{0<i \leq (\lambda - 1)p^{a-1}} c_i P^{\lambda p^a - i} P^i\]
where 
	\[c_0 = \pm \binom{(p-1)p^a - 1}{(\lambda - 1)p^a} \equiv \pm\binom{p-2}{\lambda -1} \not\equiv 0 \bmod p.\]
Since $P^i(x) = 0$ by Claim 1 for all $i < p^a$ and hence for all $i \leq (\lambda - 1)p^{a-1}$, the claim that $P^{p^a}(x)$ is a non-zero multiple of $x^{1 + \frac{p-1}{\lambda}}$ holds.

As for the right-hand side, we can furthermore use Adem relations to decompose the Steenrod powers $P^i$ using elements of the form $P^{p^j}$ with $0 \leq j \leq a - 1$. Since this equation holds in a one-dimensional group, we derive an equation of the form
	\[x^{1 + \frac{p-1}{\lambda}} = \beta(z) \mathrm{~or~} P^{p^j}(z)\]
for some cohomology element $z$. In the latter case, we note that $p^j$ satisfies the estimate
	\[2 (p - 1) p^j \leq 2 (p - 1) p^{a - 1} < 2 p^a \leq 2 \lambda p^a = k,\]
so the proof of Lemma 2.4 in \cite{Kennard13} implies that $x$ decomposes non-trivially as either a product or a Steenrod power of an element of smaller degree. As in the $\Z_2$ setting, this implies a contradiction to the minimality of $k$ (see \cite[Lemmas 1.2 and 2.3]{Kennard13}). Similarly, if $x^{1 + \frac{p-1}{\lambda}} = \beta(z)$ for some cohomology element $z$, then a similar argument leads to a contradiction to the minimality of $k$ (see \cite[Lemma 3.5]{Kennard17}).

\bigskip
\section{Truncated and manifold cases}\label{sec:truncated}
\bigskip

For an $n$-dimensional Poincar\'e duality manifold $M$, periodic $\Z_2$-cohomology is usually required to satisfy the condition that the minimum period $k$ satisfies $2k \leq n$. This condition ensures that $x^2 \neq 0$, which is essential to proving restrictions on the degree even in the singly generated case. Moreover, if $\dim M \equiv m \bmod k$, then we have automatically that $2k + m \leq n$.

Similarly, for odd primes $p$, periodic $\Z_p$-cohomology for an $n$-manifold is not expected to be restricted unless $pk \leq n$. Again letting $m$ be the congruence class of $n$ modulo $p$, Poincar\'e duality implies that periodicity goes from degree $0$ to $pk+m$. We therefore suggest the following as analogues in the truncated case of the periodicity conjecture.

\begin{conjecture}[Periodicity Conjecture, Truncated Case]
Let $X^n$ be a space and $p$ be a prime. Assume $H^*(X; \Z_p)$ is $k$-periodic up to degree $c$, in the sense that the last isomorphism lands in degree $c$. Assume that $k$ is the minimum period and that $0 \leq m < k$ is the maximum degree with $H^m(X; \Z_2) \neq 0$.
	\begin{enumerate}
	\item If $p = 2$ and $c \geq 2k + m$, then $k \in \{1, 2, 4\}$ or $(k,c) = (8, 16+m)$.
	\item If $p \geq 3$ and $c \geq pk + m$, then $k = 2 \lambda$ for some divisor $\lambda$ of $p - 1$.
	\end{enumerate}
\end{conjecture}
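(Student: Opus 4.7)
The plan is to show that each step in the proofs of Theorems \ref{thm:2} and \ref{thm:p} uses periodicity only up to a bounded range of degrees, and to verify that the hypotheses $c \ge 2k+m$ (for $p=2$) and $c \ge pk+m$ (for odd $p$) are calibrated precisely to provide enough room. In effect, the truncated cohomology should behave identically to the fully periodic case in all relevant computations, reducing the conjecture to the content of those theorems together with the unresolved seed vanishing of $\Sq^{k/2}(x)$ (respectively the odd-prime analog).

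First, I would re-prove the preliminary lemmas of Section \ref{sec:TheoremA} (the analogs of Lemma \ref{lem:2} and Lemma \ref{lem:3}) in the truncated setting. These rely on squaring putative factorizations, applying the Cartan formula, and invoking periodicity; the resulting cohomology classes live in degrees at most $2k$ for $p = 2$, or $pk$ for odd $p$, so the hypothesis on $c$ is ample. This yields that $k$ is a power of two for $p=2$, or $k = 2\lambda p^a$ for odd $p$. Next, I would execute the inductive arguments of Section \ref{sec:Calculations} (for $p=2$) or Claims 1--3 of Section \ref{sec:TheoremB} (for odd $p$) under the seed vanishing assumption, with each intermediate expression again lying in cohomology of degree at most $c$. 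The Adams-style secondary operation argument of Lemma \ref{lem:4} then yields a conditional decomposition $x^2 = \Sq^k(x) = \sum \Sq^i(w_i)$ with $w_i \in H^{2k-i}$, all within the truncation range; the contradiction with minimality via Lemma \ref{lem:2} forces $k \in \{1,2,4\}$ when $p=2$, and similarly $k = 2 \lambda$ when $p$ is odd. The $k=8$ case is special: the standard argument requires $x^3 \neq 0$, which in turn requires periodicity up to degree at least $3k+m = 24+m$; the hypothesis $c \ge 2k+m = 16+m$ falls short, so the class $x^3$ may simply vanish, which is precisely why the conjecture permits the exception $(k,c) = (8, 16+m)$.

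The main obstacle, inherited from the non-truncated conjecture, is that we have not proven the seed vanishing assumption $\Sq^{k/2}(x) = 0$ in the $p=2$ case, nor the analog $P^{(l-1) p^a + p^{a-1}}(x) = 0$ in the odd-prime case. Without additional input forcing these, the strategy above only reproves Theorems \ref{thm:2} and \ref{thm:p} in the truncated setting rather than removing their hypotheses. I expect that unconditional progress will require either a genuinely new idea at this step or a more detailed analysis of hypothetical counterexamples as sketched at the end of Section \ref{sec:TheoremA}, now additionally constrained by the fact that cohomology beyond degree $c$ is absent and any exotic classes must fit within a bounded range. In the odd-prime case, one would also need to revisit the Bockstein vanishing argument of Claim 3 in Section \ref{sec:TheoremB} to ensure that its short-exact-sequence manipulations operate within the truncation bound; this appears to be routine, since the relevant computations occur in degrees at most $k$ plus a small constant.
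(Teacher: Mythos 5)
The statement you were asked to prove is labeled a \emph{Conjecture} in the paper, and the authors give no proof of it; they only state it, motivate it via the existence of the Cayley plane example, and record what special cases are known (notably \cite[Lemma~3.3]{Kennard17} for $p\geq 3$, $k=2p$, in the odd-dimensional Poincar\'e duality manifold case). So there is no ``paper's own proof'' to compare your argument against.

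That said, your proposal is honest about this: you explicitly acknowledge at the end that the seed vanishing $\Sq^{k/2}(x)=0$ (resp.\ $\PP^{(l-1)p^a+p^{a-1}}(x)=0$) is not established, and that without it the argument ``only reproves Theorems~\ref{thm:2} and~\ref{thm:p} in the truncated setting rather than removing their hypotheses.'' That assessment is correct, and it is precisely why the statement remains a conjecture. Your bookkeeping of degrees is sensible: the Adem-relation manipulations, the Cartan formula computations of Section~\ref{sec:Calculations}, and the secondary-operation decompositions all take place in degrees bounded above by roughly $2k+m$ (for $p=2$) or $pk+m$ (for odd $p$), which is exactly what the hypothesis $c\geq 2k+m$ (resp.\ $c\geq pk+m$) is designed to accommodate. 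Your observation that the exceptional pair $(k,c)=(8,16+m)$ is permitted because $x^3$ may vanish once $c<3k+m$ is also the right intuition behind the formulation of the conjecture (compare the Cayley plane example $\mathbb{OP}^2\times N^m$ given immediately afterward in Section~\ref{sec:truncated}). But none of this constitutes a proof: the crux of the full conjecture --- forcing the seed vanishing without hypothesis --- is the open step, and that gap is inherited unchanged in the truncated setting.
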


As an example, the product of the Cayley plane $\mathbb{OP}^2$ and any closed, orientable manifold $N^m$ with $m \leq 7$ has dimension $16+m$ and has $8$-periodic but not $4$-periodic $\Z_2$-cohomology. Similarly, it has $8$-periodic but not $4$-periodic $\Z_3$ cohomology, which is permitted by the conjecture since $pk + m > \dim(\mathbb{OP}^2 \times N^m)$.

In applications where the space with periodic cohomology is a closed, orientable manifold, Poincar\'e duality imposes additional multiplicative structure. This was used in the proof of \cite[Lemma 3.3]{Kennard17}, which verifies the truncated conjecture for $p \geq 3$ and $k = 2p$ in the odd-dimensional Poincar\'e duality manifold case. Similarly, in the further special case where periodicity arises from $n$-connected inclusions $N^n \to M^{n+k}$ of Poincar\'e duality manifolds, perhaps other geometric input could be applied to prove the conjecture.

\bigskip
\section{Structure of potential counterexamples}\label{sec:PotentialCounterexamples}
\bigskip

Theorem \ref{thm:2} implies that a counterexample $X$ to Conjecture \ref{con:2} with $k = 16$ has non-trivial cohomology in degrees $i \equiv 8 \bmod 16$. Moreover, the discussion following the proof of Lemma \ref{lem:4} implies that such a counterexample has additional non-trivial cohomology in some degrees $i \not\equiv 0 \bmod 8$. In this section, we further analyze the structure of a potential counterexample with $k = 16$ and the property that $H^i(X; \Z_2) = 0$ for all $i \not\equiv 0 \bmod 4$. In fact, our analysis works more generally under the assumption that $\Sq^i = 0$ for all $i \not\equiv 0 \bmod 4$.

\begin{lemma}\label{lem:16}
Let $X$ be a space for which $H^*(X; \Z_2)$ is $16$-periodic but not $8$-periodic. Assume in addition that $\Sq^1 = 0$ and $\Sq^2 = 0$ on $H^*(X; \Z_2)$. Let $x \in H^{16}(X; \Z_2)$ denote the element inducing periodicity, and let $y_i \in H^i(X; \Z_2)$ denote the elements satisfying the equation $\Sq^i x = x y_i$ for all $i \in \{4, 8, 12\}$. All of the following hold:
	\begin{enumerate}
	\item\label{id} The map $x^{-1} \Sq^{16}$ equals the identity on $H^{16+i}(X; \Z_2)$ for all $i \in \{0,4,8,12\}$, where $x^{-1}$ denotes the inverse of the isomorphism given by multiplication by $x$.
	\item\label{big_products} Products of the form $u_8 v_{12}$, $u_{12} v_{12}$, and $u_8 \Sq^8(v_{12})$ vanish, where subscripts denote degrees.
	\item\label{small_products} Products of the form $y_i y_j$ vanish for all $i$ and $j$, including $i = j$.
	\item\label{y12} $\Sq^4 y_8 = y_{12}$ and $\Sq^8 y_{12} = x y_4$.
	\end{enumerate}
\end{lemma}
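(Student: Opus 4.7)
My plan is to bootstrap the four parts using Lemma~\ref{lem:2}, the Cartan formula, and iterated Adem relations applied to the defining identities $\Sq^{j}(x) = xy_{j}$. The hypothesis $\Sq^1 = \Sq^2 = 0$ forces, via Adem, that $\Sq^i = 0$ on $H^*(X;\Z_2)$ for every $i \not\equiv 0 \pmod{4}$, together with the on-$H^*$ operator identities $(\Sq^4)^2 = 0$, $\Sq^4\Sq^8 = \Sq^{12}$, $\Sq^4\Sq^{12} = 0$, $\Sq^{12}\Sq^4 = \Sq^8\Sq^8$, $\Sq^8\Sq^{12} = \Sq^{20} + \Sq^{16}\Sq^4$, $\Sq^{16}\Sq^{12} = \Sq^{20}\Sq^8$, and $\Sq^{12}\Sq^{12} = \Sq^{20}\Sq^4$. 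From Lemma~\ref{lem:2}: any nonzero product in $H^{16} = \Z_2 x$ with both factors of strictly intermediate degree would factor $x$, so $H^4 \cdot H^{12} = 0$ and $H^8 \cdot H^8 = 0$; analogously $\Sq^i(H^{16-i}) = 0$ for $i \in \{4,8,12\}$, else $x$ would decompose as a Steenrod image. In particular $y_4 y_{12} = y_8^2 = 0$, $\Sq^4 \equiv 0$ on $H^{12}$, and every element of $H^8$ squares to zero.

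Cartan-expanding $\Sq^{12}(x) = \Sq^4(xy_8)$ yields the structural formula $y_{12} = \Sq^4 y_8 + y_4 y_8$; applying $\Sq^4$ with $(\Sq^4)^2 = 0$ and $y_4 y_{12} = 0$ gives $\Sq^4 y_{12} = 0$, and squaring gives $y_{12}^2 = (\Sq^4 y_8)^2 = \Sq^8\Sq^8(y_8) = \Sq^8(y_8^2) = 0$. To pin down $y_4 y_8 = 0$ and the remaining $y_i y_j$ vanishings of part~(\ref{small_products}), I would evaluate each of $\Sq^{16}\Sq^{12}(x) = \Sq^{20}\Sq^8(x)$, $\Sq^8\Sq^{12}(x) = \Sq^{16}\Sq^4(x)$, and $\Sq^{12}\Sq^{12}(x) = \Sq^{20}\Sq^4(x)$ in two ways by Cartan. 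The right-hand sides collapse to $x^2 \Sq^4 y_8$, $x^2 y_4$, and $x^2 y_4^2$ respectively; the left-hand sides, after using the established vanishings, become (writing $\Sq^8 y_{12} = xp$ and $y_8 y_{12} = xq$ with $p,q \in H^4$) the three linear identities $y_8(p+y_4) = 0$, $p + q = y_4$, and $y_4(p+y_4) = 0$, which simplify to $y_8 q = 0$ and $y_4 q = 0$. A final pair of calculations of $\Sq^4$ and $\Sq^8$ applied to $y_8 y_{12} = xq$ in two ways then forces $q^2 = 0$ and $y_4 y_8 = 0$ respectively.

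With part~(\ref{small_products}) secured and part~(\ref{big_products}) following by analogous two-way Cartan expansions of $\Sq^{12}(u_8 v_{12}) = 0$ and $\Sq^{24}(u_{12}v_{12})$---each leveraging $\Sq^8 \equiv 0$ on $H^8$, $\Sq^4 \equiv 0$ on $H^{12}$, and the already-established specific vanishings---part~(\ref{id}) is a direct Cartan expansion: for $u \in H^i$ with $i \in \{0,4,8,12\}$, $\Sq^{16}(xu) = \sum_{4 \mid j} \Sq^j(x)\Sq^{16-j}(u)$ has leading term $x^2 u$, and the at most three remaining terms $xy_{12}u^2$, $xy_8\Sq^4 u$, $xy_4\Sq^8 u$ each vanish by parts~(\ref{small_products}) and~(\ref{big_products}). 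Part~(\ref{y12}a) is immediate from the structural formula and $y_4 y_8 = 0$. For part~(\ref{y12}b), combine $\Sq^8\Sq^{12}(x) = \Sq^{16}(xy_4) = x^2 y_4$ (Adem plus direct Cartan with $y_4 y_{12} = 0$) with the Cartan expansion $\Sq^8(xy_{12}) = x\Sq^8 y_{12} + xy_8 y_{12}$ (using $\Sq^4 y_{12} = 0$); $y_8 y_{12} = 0$ from (\ref{small_products}) collapses this to $x \Sq^8 y_{12} = x^2 y_4$, giving $\Sq^8 y_{12} = xy_4$ after cancelling $x$. The main obstacle I expect is extending the interlocking bootstrap from the specific $y_i y_j$ identities to the generic products of (\ref{big_products}): even with all $y_i y_j = 0$ in hand, proving $u_8 v_{12} = 0$ for arbitrary $u_8 \in H^8, v_{12} \in H^{12}$ requires a second round of two-way Cartan expansions of $\Sq^{12}(u_8 v_{12})$ and $\Sq^{16}(u_8 v_{12})$, carefully calibrated against the Lemma~\ref{lem:2} target-group constraints and the already-established operator-level vanishings.
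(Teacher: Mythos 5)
Your outline inverts the paper's order of proof---establishing part (\ref{small_products}) before parts (\ref{id}) and (\ref{big_products})---and this creates a real gap that is not closed. The paper first proves part (\ref{id}) for $i \in \{0,4\}$ by direct arguments, then the first claim of (\ref{big_products}) (by contradiction, applying (\ref{id}) for $i=4$ to $\Sq^{16}(u_8 v_{12})$ and pulling out $\Sq^4$), then (\ref{id}) for $i \in \{8,12\}$, then the remaining claims of (\ref{big_products}), and only then (\ref{small_products}) and (\ref{y12}). This ordering matters because the vanishings $y_8 y_{12} = 0$, $y_{12}^2 = 0$, and (implicitly, through $H^{12}\cdot H^{12}=0$) $y_4^2 = 0$ are obtained in the paper \emph{as special cases} of part (\ref{big_products}). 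Your calculation derives $y_{12}^2 = 0$ directly (via $\Sq^{12}\Sq^4 = \Sq^8\Sq^8$, which is correct and nice), and it derives $y_4 y_8 = 0$, but the linear system you extract---$y_8 q = 0$, $y_4 q = 0$, $q^2 = 0$ with $y_8 y_{12} = xq$ and $\Sq^8 y_{12} = xp$, $p+q=y_4$---does not force $q = 0$, so $y_8 y_{12} = 0$ is not established; nor is $y_4^2 = 0$, since $y_4(p+y_4)=0$ only gives $y_4^2 = y_4 p$ with $p$ unknown. Both are part of claim (\ref{small_products}), so that claim is left incomplete, and then (\ref{y12}b) (which uses $y_8y_{12}=0$) and (\ref{id}) for $i=12$ (which uses both (\ref{big_products}) and (\ref{small_products})) inherit the gap.

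You candidly flag the difficulty with part (\ref{big_products}) at the end, and this is the right concern: your proposed two-way expansion of $\Sq^{12}(u_8 v_{12})$ against $\Sq^{12}(xw_4)$ yields only $w_4(y_8 w_4 + y_{12}) = 0$, and $\Sq^{24}(u_{12}v_{12}) = 0$ is vacuous because $\Sq^{24}(xw_8) = x^2w_8^2 = 0$ identically, so neither constrains $w_4$ or $w_8$ to vanish. The paper avoids this by proving (\ref{id}) for $i=4$ \emph{first} (using only the product vanishing $H^4\cdot H^{12}\to H^{16}=\Z_2 x$, the fact that $\Sq^4 = 0$ on $H^{12}$, and $\Sq^1 = \Sq^2 = 0$---none of which need part (\ref{big_products})), and then running the contradiction argument $u_8 v_{12} = xw_4 \neq 0 \Rightarrow \Sq^4(u_8)v_{12}^2 = x^2 w_4 \neq 0 \Rightarrow u_8 v_{12}^2 = x^2$, contradicting the nonfactoring of $x^2$. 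Your preliminary reductions (that $\Sq^i = 0$ for $i\not\equiv 0\bmod 4$, the operator-level Adem identities, $y_{12} = \Sq^4 y_8 + y_4 y_8$, and $y_{12}^2=0$) are all correct and consistent with the paper's tools, so the right fix is to adopt the paper's order: prove (\ref{id}) for small $i$ and (\ref{big_products}) before attempting (\ref{small_products}), rather than trying to bootstrap (\ref{small_products}) independently.
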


\begin{proof}
Part \eqref{id} is immediate for $i = 0$ and is straightforward for $i = 4$ since $x^2$ does not non-trivially factor. The proof for $i \in \{8, 12\}$ requires results on products, so we prove these next.

To prove the first claim of Part \eqref{big_products}, suppose that $u_8 v_{12} = x w_4 \neq 0$. Applying $\Sq^{16}$ to both sides and applying \eqref{id}, we have $\Sq^4(u_8) \Sq^{12}(v_{12}) = x^2 w_4 \neq 0$. Pulling out the $\Sq^4$ using the Cartan formula, we have that $u_8 v_{12}^2 = x^2$ since $u_8 \Sq^4(v_{12}^2) = u_8 \Sq^2(v_{12})^2 = 0$. This is a contradiction since $x^2$ does not non-trivially factor.

We now prove Part \eqref{id} for $i = 8$. The Cartan formula implies $\Sq^{16}(x t_8) = x^2 t_8 + x y_{12} \Sq^4(t_8)$ since $t_8^2 = 0$. The claim follows because $y_{12} \Sq^4(t_8) = \Sq^4(y_{12} t_8) - \Sq^4(y_{12}) t_8$, which vanishes by the first claim of \eqref{big_products} together with the fact that $x$ is not in the image of a non-trivial Steenrod operation.

The last claim of Part \eqref{id} follows similarly, so we omit the proof.

Next we prove the second claim in \eqref{big_products}, suppose $u_{12} v_{12} = x w_8 \neq 0$. By Part \eqref{id}, applying $\Sq^{16}$ yields \[x^2 w_8 = \Sq^8(u_{12}) \Sq^8(v_{12}) = \Sq^8\of{u_{12} \Sq^8(v_{12})} - \Sq^4(u_{12})\Sq^{4}\Sq^8 v_{12} - u_{12} \Sq^8\Sq^8 v_{12}.\] The first two terms on the right-hand side vanish by the minimality properties of $x$, and the last also vanishes by using the Adem relation for $\Sq^8 \Sq^8$. 

The last claim of Part \eqref{big_products} follows by the Cartan formula, which implies $u_8 \Sq^8(v_{12}) = \Sq^8(u_8 v_{12}) - \Sq^4(u_8) \Sq^4(v_{12}) - u_8^2 v_{12}$, which vanishes by the first part of \eqref{big_products} and the properties of $x$ following from minimality.

To prove Part \eqref{small_products}, first consider the $x y_4^2 = y_4 \Sq^4 x$. Applying $\Sq^{16}$ to both sides and applying \eqref{id}, we obtain
	\[x^2 y_4^2 =  \Sq^{16}(y_4 \Sq^4 x) = y_4 \Sq^{16}\Sq^4 x = y_4 \Sq^8 \Sq^{12} x.\]
Applying the Cartan formula to the action of $\Sq^8$, we have
	\[y_4 \Sq^8 \Sq^{12} x = \Sq^8\of{y_4 \Sq^{12} x} - y_4^2 \Sq^4 \Sq^{12} x = 0.\]
Hence $x y_4^2 = 0$, which implies $y_4^2$ vanishes by periodicity. 

To finish the proof of Part \eqref{small_products}, we only need to prove $y_4 y_8 = 0$ since the other products vanish by minimality of $x$. For this we follow a similar strategy as before.  Consider the term $x y_4 y_8 = y_8 \Sq^4 x$. Applying $\Sq^{16}$ yields
	\[x^2 y_4 y_8 = y_8 \Sq^{16} \Sq^4 x + \Sq^4 y_8 \Sq^{12} \Sq^4 x.\]
The first term on the right-hand side vanishes as in the previous argument, using in addition the fact that $y_8 \Sq^{12} x = 0$ by Part \eqref{big_products}. The second term on the right-hand side also vanishes because $x^2$ does not decompose non-trivially as the image of a Steenrod operation. Hence the right-hand side is zero in contradiction to Part \eqref{id}.

To prove Part \eqref{y12}, we first apply Part \eqref{small_products} to calculate 
	\[x y_{12} = \Sq^{12}(x) = \Sq^4 \Sq^8 x = \Sq^4(x y_8) = x \Sq^4 y_8,\]
which proves $y_{12} = \Sq^4 y_8$. Similarly, we can apply Part \eqref{big_products} and the minimality properties of $x$ to calculate
	\[x \Sq^8 y_{12} = \Sq^8(x y_{12}) = \Sq^8 \Sq^{12} x = \Sq^{20} x + \Sq^{16} \Sq^4 x = \Sq^{16}(x y_4).\]
Multiplying by $x^{-1}$ and applying Part \eqref{id} yields the claim.
\end{proof}

We can push this analysis further with the use of secondary cohomology operations. 

\begin{lemma}
If $X$ is a counterexample to Conjecture \ref{con:2} with $k = 16$, and if $\Sq^1$ and $\Sq^2$ vanish on $H^*(X; \Z_2)$, then $\Sq^8 \Sq^4 y_8 = x y_4 \neq 0$ where $\Sq^i x = x y_i$ for $i \in \{4, 8\}$.
\end{lemma}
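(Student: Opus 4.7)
The equation $\Sq^8 \Sq^4 y_8 = xy_4$ is immediate from Lemma \ref{lem:16}(\ref{y12}), which gives $\Sq^4 y_8 = y_{12}$ and $\Sq^8 y_{12} = xy_4$. The substance of the lemma is therefore the non-vanishing statement $xy_4 \neq 0$, which I plan to establish by contradiction using an Adams-type secondary cohomology operation.

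Suppose $xy_4 = 0$, so $\Sq^4 x = 0$. Combined with the hypothesis $\Sq^1 = \Sq^2 = 0$, this places $x$ in the kernel of $\Sq^{2^i}$ for $i = 0, 1, 2$, which is precisely the vanishing needed to invoke the conditional relation for $\Sq^8$ coming from Adams' secondary operation (see \cite[Theorem 6.2.1]{Harper02}). Under $\Sq^1 = \Sq^2 = 0$ this reads $\Sq^8 x = \Sq^4(w) \pmod{\Sq^4(H^{20})}$ for some $w \in H^{20}$. By periodicity, $w = xv$ for some $v \in H^4$; the Cartan formula together with $\Sq^4 x = 0$ then yields $\Sq^4(xv) = xv^2$, and after absorbing the indeterminacy (which consists of analogous elements $xu^2$) one concludes that $y_8 = v^2$ for some $v \in H^4$. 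A further Cartan calculation gives $y_{12} = \Sq^4(v^2) = 0$, and Lemma \ref{lem:16}(\ref{small_products}) yields $v^4 = y_8^2 = 0$.

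The remaining step is to extract a contradiction from the constraints $y_4 = y_{12} = 0$, $y_8 = v^2 \neq 0$, and $v^4 = 0$. I would pursue this either by applying a further secondary cohomology operation to $y_8 = v^2$---which now satisfies $\Sq^{2^i} y_8 = 0$ for $0 \le i \le 3$---and comparing its output against the forced action of $\Sq^{16}$ from Lemma \ref{lem:16}(\ref{id}), or by exploiting the factorization $\Sq^8 x = \Sq^4(xv)$ together with the minimality properties established in Lemma \ref{lem:2} to produce a forbidden decomposition. The main obstacle is precisely this final step: all primary Steenrod computations remain consistent with the assumption $y_4 = 0$, so the contradiction must arise from a carefully chosen secondary operation argument, in the spirit of the Gon\c{c}alves treatment of the $k = 8$ case outlined in Section \ref{sec:ClassicalProof}.
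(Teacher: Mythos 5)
Your reduction to showing $\Sq^4 x \neq 0$, via Lemma \ref{lem:16}\eqref{y12}, is correct and matches the paper's first step. However, the argument then breaks down at its foundation: you invoke an Adams-type secondary decomposition of $\Sq^8$ valid on $\ker(\Sq^1, \Sq^2, \Sq^4)$, claiming $\Sq^8 x = \Sq^4(w)$ modulo indeterminacy. No such relation exists. Adams' secondary decomposition theorem applies to $\Sq^{2^a}$ only for $a \geq 4$ (so $\Sq^{16}, \Sq^{32}, \ldots$), and $\Sq^8$ is genuinely excluded. The Cayley plane $\mathbb{OP}^2$, whose $\Z_2$-cohomology is $\Z_2[x]/(x^3)$ with $x$ in degree $8$, lies in $\ker(\Sq^1, \Sq^2, \Sq^4)$ for degree reasons yet satisfies $\Sq^8 x = x^2 \neq 0$; any conditional relation of the form you assert would force $x^2 = 0$ there. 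The citation of \cite[Theorem 6.2.1]{Harper02} does not cover $\Sq^8$; the paper uses that theorem precisely for the $\Sq^{16}$ decomposition (labelled (a) in the proof).

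Even setting this aside, you acknowledge you cannot close the argument: the constraints $y_4 = y_{12} = 0$, $y_8 = v^2 \neq 0$, $v^4 = 0$ are not led to a contradiction. The paper completes the proof by (i) applying a Lin--Williams-type $\Sq^{16}$ decomposition (valid on $\ker(\Sq^1, \Sq^2, \Sq^8) \cap \ker(\Sq^2\Sq^4, \Sq^8\Sq^4)$, so the hypothesis $\Sq^4 u = 0$ is not required) to $x y_8 \in H^{24}$, which via Part \eqref{id} of Lemma \ref{lem:16} yields $y_8 = z_4^2$ for some $z_4 \in H^4$; (ii) applying the Adams $\Sq^{16}$ decomposition to $x z_4^3 \in H^{28}$ to deduce the stronger vanishing $z_4^3 = 0$, which you did not obtain (your $v^4 = 0$ is weaker and insufficient); (iii) applying the Lin--Williams decomposition to $x z_4 \in H^{20}$, whose kernel hypotheses are now verified using $z_4^3 = 0$, to express $x z_4$ as $\Sq^8$ of something; (iv) concluding from this, via the Cartan formula and minimality, that $y_8 = 0$; and (v) applying the Adams $\Sq^{16}$ decomposition to $x$ itself, now that $y_4 = y_8 = 0$, to force $x^2 = 0$, the desired contradiction. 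Your proposal misses both the key auxiliary element $z_4$ with $z_4^3 = 0$ and the chain of further secondary decompositions, and these are precisely what make the argument close.
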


\begin{proof}
We assume that $x \in H^{16}(X; \Z_2)$ induces periodicity and that $16$ is the minimum period. 
We assume also that $\Sq^1 = 0$ and $\Sq^2 = 0$ everywhere. 
In particular, Lemma \ref{lem:16} applies. 
If $\Sq^4(x) \neq 0$, then Lemma \ref{lem:16} implies that $\Sq^8 \Sq^4 y_8 = \Sq^8 y_{12} = x y_4 \neq 0$, as claimed. 
To finish the proof, we assume $\Sq^4(x) = 0$ and seek a contradiction.

Lemma \ref{lem:16} only used primary operations in the proof. Here we use the following secondary decompositions:
	\begin{enumerate}
	\item[(a)] If $u \in \ker(\Sq^1, \Sq^2, \Sq^4, \Sq^8)$, then 
		$\Sq^{16} u = \Sq^8(v) + \Sq^4 \Sq^8(v') + \Sq^4\Sq^2\Sq^1(v'')$ 
	modulo $\im(\Sq^1, \Sq^2)$ for some $v$, $v'$, and $v''$.
	\item[(b)] If $u \in \ker(\Sq^1, \Sq^2, \Sq^8) \cap \ker(\Sq^2\Sq^4, \Sq^8\Sq^4)$, then $\Sq^{16} u = \Sq^4(w) + \Sq^8(w')$ modulo $\im(\Sq^1, \Sq^2)$ for some $w$ and $w'$.
	\end{enumerate}
The first decomposition follows from \cite[Theorem 6.2.1]{Harper02}. For the second one, we refer to \cite[Section 3]{LinWilliams91}. We note first that the $\alpha_{13}$ in the notation of \cite{LinWilliams91} should be $\alpha_{13} = \Sq^3 + \Sq^2 \Sq^1$. We also remark that Lin and Williams state the secondary decomposition only for integral cohomology elements $u$. However, there is a standard procedure for padding the matrix $B$ and vector $c = (\Sq^2, \Sq^6, \Sq^8, \Sq^8\Sq^4)^T$ in the equation at the top of \cite[page 180]{LinWilliams91} and adding an entry to the vector \[a = (\alpha_3, \alpha_8, \alpha_9, \alpha_{12}, \alpha_{13}, \alpha_{15})\] to extend the decomposition to all (not necessarily integral) elements as long as, in addition, they lie in the kernel of $\Sq^1$. This is sufficient to obtain (b). We omit the details.

By Parts \eqref{big_products} and \eqref{y12} of Lemma \ref{lem:16}, we have $x y_8 \in \ker(\Sq^8\Sq^4)$. It is then easy to check that secondary decomposition (b) applies to $x y_8$. By Part \eqref{id}, we have
	\[x^2 y_8 = \Sq^{16}(x y_8) = \Sq^4(x z_4) + \Sq^8(x^2 z_0),\]
where $z_4 \in H^4(X; \Z_2)$ and $z_0 \in H^0(X; \Z_2)$. Since $y_4 = 0$, we conclude 
	\[y_8 = z_4^2.\]
We claim that $z_4^3 = 0$. Indeed, $x z_4^3 \in \ker(\Sq^4, \Sq^8)$, so conditional relation (a) applies. By Part \eqref{id}, we have
	\[x^2 z_4^3 = \Sq^{16}(x z_4^3) = \Sq^8(x^2 a_4) + \Sq^4\Sq^8(x^2 a_0) + \Sq^4\Sq^2\Sq^1(x^2 a_5)\]
for some $a_4 \in H^4(X; \Z_2)$, $a_0 \in H^0(X; \Z_2)$, and $a_5 \in H^5(X; \Z_2)$. All terms on the right-hand side vanish by the Cartan formula and the assumptions that $y_4 = 0$ and $\Sq^1 = 0$, so we have that
	\[z_4^3 = 0,\]
as claimed. It now follows that $x z_4 \in \ker(\Sq^8)$ since $y_8 = z_4^2$ and $y_4 = 0$. But we also have $x z_4 \in \ker(\Sq^8 \Sq^4)$ since this lands in the group generated by $x^2$, which does not decompose by the minimality properties of $x$. Hence conditional relation (b) applies to $x z_4$ and we have a relation of the form
	\[x^2 z_4 = \Sq^{16}(x z_4) = \Sq^4(x^2 b_0) + \Sq^8(x b_{12}).\]
As before the first term on the right-hand side is zero, and the other equals $x \Sq^8(z_{12})$. We now have
	\[x y_8 = x z_4^2 = \Sq^8(z_{12}) z_4 = \Sq^8(z_{12} z_4) + \Sq^4(z_{12}) z_4^2.\]
Both terms on the right-hand side vanish by the minimality properties of $x$, so we
	\[y_8 = 0.\]
But now the proof of the main theorem applies. In short, we can now prove that conditional relation (a) applies to $x$, and the minimality properties of $x$ imply that 
	\[y_{16} = 0,\]
where for suggestive reasons we have denoted $x$ by $y_{16}$ since they both satisfy the equation $\Sq^{16}(x) = x y_{16}$. This is a contradiction, so the proof is complete.
\end{proof}

To motivate further investigation, we propose the following as a minimal potential counterexample to Conjecture \ref{con:2} when $k = 16$.

\begin{question}
Does there exist a space X with $\Z_2$-cohomology ring isomorphic to $\Z_2[y_4, y_8, y_{12}, x_{16}]/(y_4^2, y_4 y_8, y_4 y_{12}, y_8^2, y_8 y_{12}, y_{12}^2)$ and the properties that 
$\Sq^i x_{16} = x_{16} y_i$ for all $i \in \{4, 8, 12\}$, $\Sq^4 y_8 = y_{12}$, and $\Sq^8 y_{12} = x y_4$. 
\end{question}


\end{document}